\numberwithin{equation}{section}
\newtheorem{teo}{Theorem }[section]
\newtheorem{lem}[teo]{Lemma}
\newtheorem{prop}[teo]{Proposition}
\newtheorem{rem}{Remark}
\begin{document}


\title[KAWAHARA EQUATION ON A HALF-STRIP]
      { THE 2D KAWAHARA EQUATION ON A HALF-STRIP }
\author[ N.~A. Larkin]{Nikolai A. Larkin$^{\dag}$}

\address
{
Departamento de Matem\'atica, Universidade Estadual
de Maring\'a, Av. Colombo 5790: Ag\^encia UEM, 87020-900, Maring\'a, PR, Brazil
}

\email{$^{\dag}$nlarkine@uem.br}

\keywords {Kawahara equation , Dispersive equations, Exponential
Decay}
\thanks{}
\thanks{ N. A. Larkin was supported by Funda\c{c}\~ao
Arauc\'aria, Estado do Paran\'a, Brasil} \subjclass[]{}

\thanks{2000 Mathematical Subject Classification: 35M20, 35Q72}

\begin{abstract}
We formulate on a half-strip an initial boundary value problem for
the two-dimensional Kawahara equation.
 Existence and uniqueness of a regular solution as well as the exponential decay rate for the elevated norm
$$\|u\|^2_{H^1(D)}(t)+\|u\|^2_{L^2(D)}(t)$$
  of small solutions   as $t \rightarrow \infty$ are proven.
\end{abstract}

\maketitle

\section{\bf Introduction}

We are concerned with an initial boundary value problem (IBVP) posed
on a half-strip for the 2D Kawahara equation (KZK)
$$ u_t+(\alpha +u)u_x +u_{xxx}+u_{xyy} -\partial^5_x u=0    \eqno(1.1)$$
 which is a two-dimensional analog of the well-known Kawahara
equation, \cite{phys,fam3,kawa},
$$ u_t+(\alpha+u)u_x +u_{xxx}-\partial^5_x u=0,  \eqno(1.2)$$
where $\alpha $ is  equal  to 1 or to 0.
 The theory of the Cauchy problem for (1.2) and other dispersive
equations like the KdV equation has been extensively studied and is
considerably advanced today
\cite{biagioni,bona2,bourgain,cui,kato,ponce2,saut2,temam1}. In
recent years, results on IBVPs  for dispersive equations both in
bounded and unbounded domains have appeared
\cite{bona3,bubnov,chile,colin,fam2,familark,larkin,lar2}. It was
discovered in \cite{larkin,marcio} that the KdV and Kawahara
equations have an implicit internal dissipation. This allowed the
proof of exponential decay of small solutions in bounded domains
without adding any artificial damping term. Later, this effect was
proven for a wide class of dispersive equations of any odd order
with one space variable \cite{familark}.
\par On the other hand, it has been shown in \cite{rosier2}
that control of the linear KdV equation with the linear transport
term $u_x$ (the case $\alpha=1$) may fail for critical domains. It
means that there is no decay of solutions  for a set of critical
domains, hence, there is no decay of solutions in a quarter-plane
without inclusion into equation of some additional internal damping.
More recent results on control and stabilization for the KdV
equation can be found in \cite{rozan}. Nevertheless,  it is possible
to prove the exponential decay rate of small solutions for the KdV
and Kawahara equations posed on any bounded interval neglecting the
transport term (the case $\alpha=0$) \cite {larluc,marcio}.
\par As far as the ZK equation is concerned, there are some recent results
 on the Cauchy problem and IBVP
\cite{faminski,fam2,fara,pastor,pastor2,ribaud,saut2,saut3,saut4}.
 Our work was motivated by \cite{saut3,saut4} on IBVP for the ZK equation posed on bounded domains and on a  strip
unbounded in $y$ variable.
 Studying this paper, we have found that in the case of the ZK equation posed on a half-strip
(which simulates a flow in a channel)
 the walls of the channel and the term
$u_{xyy}$  deliver additional "dissipation" which helped to prove
decay of small solutions in domains of a channel type unbounded in
$x$ direction \cite{ h-strip,h1}.
\par Publications on dispersive multidimensional equations of a higher
order (such as the KZK equation) appeared quite recently and were
concerned with the existence of weak solutions, \cite{fam3}, and
physical motivation \cite{phys}.
\par We study (1.1) on a half-strip
 $$
 D=\left\{(x,y) \in \mathbb{R}^{2}: \quad x>0, \quad y \in (0,L)
\right\}
$$
and establish   exponential decay of small solutions   even for
$\alpha=1$ provided that $L$ is not too large. If $\alpha =0$, we
obtain the exponential decay rate of small solutions for any finite
$L$. We limit our scope, from technical reasons, to homogeneous
boundary conditions, but it is also possible to consider
nonhomogeneous ones. More precisely, we formulate in Section 2 the
IBVP (2.1)-(2.4). In order to demonstrate existence of global
regular solutions, we exploit the Faedo-Galerkin method. Estimates,
independent of the parameter of approximations $N$, permit us to
establish the existence of regular solutions for the original
problem (2.1)-(2.4). We prove these estimates in Section 3. \\
Surprisingly, we did not succeed to prove global existence for all
positive weights $e^{kx}$ as in \cite{h1,h-strip} and  imposed a
restriction $3-5k^2>0.$ Our condition for the width of a channel,
$0<L<\pi$, is more precise then $0<L<2\sqrt{2}$ in \cite{h1,h-strip}
due to the sharp estimate
$$\|u\|^2_{L^2(D)}(t)\leq \frac{L^2}{\pi^2}\|u_y\|^2(t)_{L^2(D)}$$
instead of
$$\|u\|^2_{L^2(D)}(t)\leq \frac{L^2}{8}\|u_y\|^2(t)_{L^2(D)}$$
used in \cite{h1,h-strip}.

In Section 4, we pass to the limit as $N \to \infty$ and obtain a
global regular solution of (2.1)-(2.4). In Section 5, we prove
uniqueness of a regular solution. Finally, in Section 6, we
establish the exponential decay rate for the elevated norm \quad
$\|u\|^2_{H^1(D)}(t)+\|u_{xx}\|^2_{L^2(D)}(t)$\quad of small
solutions both for $\alpha =1$ and for $\alpha =0.$

\section{Formulation of the problem }

Let $T,L$ be real positive numbers;
\begin{eqnarray} && D= \left\{(x,y) \in \mathbb{R}^{2}: \quad x>0, \quad y \in (0,L) \right\}; \nonumber \\
&& Q_t= D \times (0,t), \quad t \in (0,T). \nonumber
\end{eqnarray}
Consider in $Q_t$ the following IBVP:
\begin{eqnarray}
&& Lu \equiv u_t + \alpha u_x+ uu_x +\Delta u_x-\partial^5_x u=0 \quad \textrm{in} \quad Q_t; \label{1.1} \\
&& u(0,y,t)= u_x(0,y,t)=u(x,0,t)=u(x,L,t)=0, \nonumber \\
&& \qquad \qquad \qquad y \in (0,L), \quad x>0, \quad t>0; \label{1.2}\\
&&
\lim_{x\to\infty}u(x,y,t)=\lim_{x\to\infty}u_x(x,y,t)=\lim_{x\to\infty}u_{xx}(x,y,t),\\
&& u(x,y,0)=u_0(x,y), \quad (x,y) \in D. \label{1.3}
\end{eqnarray}
Here $\partial_x^j={\partial^j}/{\partial x^j}$,
$\partial_y^j={\partial^j}/{\partial y^j}$,
 ${\Delta}= \partial_x^2 + \partial_y^2$,\;$\alpha = 0$ or $1$. We adopt the usual
 notations $H^k$ for $L^2$-based Sobolev spaces;\,
 $\| \cdot \|$ and $(\cdot , \cdot )$ denote the norm and the scalar product in $L^2(D)$, ${|\nabla u |}^2=u_x^2+u_y^2$.

\section{Existence Theorem}

\begin{teo}\label{T1} Let $T$, $L$ be arbitrary real positive numbers, $\alpha = 1$ and $k$ be a real positive number
such that $3-5k^2>0.$. Given $u_0(x,y)$  such that
\begin{eqnarray}
&& u_0 \in H^2(D), \quad  (\Delta u_{0x}+ \partial_x^5 u_0)\;\in L^2(D), \nonumber \\
&& u_0(0,y)=u_{0x}(0,y)=u_0(x,0)=u_0(x,L)=0, \nonumber
\end{eqnarray}
\begin{eqnarray}
&&J_w\equiv \displaystyle\int_D e^{kx} \{u_0^2 + {|\nabla
u_0|}^2+ {|\partial^2_y u_0|}^2+ {|\partial^2_x u_0|}^2\nonumber\\
&&+[\partial_x^5u_0+ \Delta u_{0x}]^2 \} dxdy < \infty ,\nonumber
\end{eqnarray}
 there exists a unique regular solution of
(\ref{1.1})-(\ref{1.3}):
\begin{eqnarray*}
&& u \in L^{\infty} (0,T;H^2(D)) \cap L^2 (0,T;H^3(D)),  \\
&& \partial_x^4 u,\; \partial_x^5 u \;\in L^2(0,T;L^2(D)),\;u_{xxy}\in L^{\infty}(0,T;L^2(D));\\
&&(\partial_x^5 u+ \Delta u_x )\in L^{\infty} (0,T;L^2(D)) \cap L^2(0,T;H^1(D)), \\
&& u_t \in L^{\infty} (0,T;L^2(D)) \cap L^2(0,T;H^1(D)). \\
\end{eqnarray*}
\end{teo}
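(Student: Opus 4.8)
The plan is to construct the solution by the Faedo--Galerkin method combined with weighted a priori estimates, exactly in the spirit of the scheme announced in the Introduction. First I would fix a smooth basis $\{w_j\}$ of the space of functions on $D$ satisfying the boundary conditions (\ref{1.2}), built for instance from eigenfunctions of a suitable elliptic problem with the homogeneous Dirichlet/Neumann data at $x=0$ and the Dirichlet data at $y=0,L$, truncating in the $x$-direction to handle the half-strip. Writing $u^N=\sum_{j=1}^N g_j^N(t)w_j$ and imposing the Galerkin relations, one gets an ODE system for the $g_j^N$; local existence is immediate, and the estimates below will make it global on $[0,T]$.

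The core of the argument is a hierarchy of energy identities, each obtained by multiplying (\ref{1.1}) by an appropriate multiplier, integrating over $D$, and using the boundary conditions to convert the skew-symmetric dispersive terms into boundary traces with favorable sign (this is the ``implicit internal dissipation'' mentioned in the Introduction). The multipliers I would use, in order, are: $u^N$ itself, yielding control of $\|u^N\|^2(t)$ plus time-integrals of the boundary traces $u_x^2(0,\cdot,t)$, $u_{xx}^2(0,\cdot,t)$; then $e^{kx}u^N$, producing the weighted $L^2$ norm and, crucially, extra interior dissipation terms whose good sign requires precisely the hypothesis $3-5k^2>0$ (the weight differentiates to bring down factors of $k$ that must be dominated by the fifth-order term); then $-\,\partial_x(e^{kx}u_x^N)$ and $-\,\partial_y(e^{kx}u_y^N)$ (or equivalently pairing with $\Delta(e^{kx}u^N)$ type multipliers) to reach the $H^1$-level and gradient estimates, where the term $u_{xyy}$ together with the channel walls $y=0,L$ supplies the ``additional dissipation'' noted for the ZK case; and finally $(\partial_x^5 u^N+\Delta u_x^N)$ and $u_t^N$ to capture the highest-order quantities $\partial_x^4u$, $\partial_x^5u$, $u_{xxy}$ and $u_t$, using the equation itself to trade $u_t$ against the spatial operator. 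In every one of these steps the nonlinearity $uu_x$ is handled by integration by parts and the anisotropic Sobolev/Gagliardo--Nirenberg inequalities on $D$, together with the sharp Poincaré-type inequality $\|u\|^2_{L^2(D)}\le (L^2/\pi^2)\|u_y\|^2_{L^2(D)}$ stressed in the Introduction, so that the cubic error terms are absorbed by the dissipative quadratic terms once $0<L<\pi$; a Gronwall argument then closes each estimate uniformly in $N$.

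The main obstacle I anticipate is twofold. First, keeping the weight $e^{kx}$ under control through all five multiplier levels: each integration by parts against the weight generates lower-order terms with coefficients that are polynomials in $k$, and one must check at every stage that the fifth-order dissipation (which comes with the bare coefficient $1$) dominates them — this is exactly why one cannot take $k$ arbitrarily and why the condition narrows to $3-5k^2>0$ rather than merely $k>0$. Second, the compatibility at the highest level: to legitimately test with $(\partial_x^5u^N+\Delta u_x^N)$ and with $u_t^N$ one needs the initial data to lie in the space where $J_w<\infty$ and $(\Delta u_{0x}+\partial_x^5u_0)\in L^2(D)$, and one must verify that the Galerkin projections of $u_0$ converge in this stronger topology so that the bounds on $u^N_t(0)$ etc.\ are uniform; this forces the particular choice of basis made in the first step.

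Once the $N$-independent bounds are in hand, the remaining steps are routine. I would extract, via Banach--Alaoglu and Aubin--Lions compactness (the latter to pass the nonlinear term $u^Nu^N_x$ to the limit on bounded-in-$x$ subdomains, then exhaust $D$), a subsequence converging to a limit $u$ possessing all the regularity listed in the statement; passing to the limit in the Galerkin equations gives that $u$ solves (\ref{1.1})--(\ref{1.3}) in the appropriate weak-then-strong sense, the boundary and initial conditions being inherited from the trace estimates and from weak lower semicontinuity of the weighted norms. Uniqueness (deferred in the paper to Section~5) follows by the usual energy estimate for the difference of two solutions, again using $0<L<\pi$ to absorb the difference of the nonlinear terms. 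I therefore expect the write-up to devote most of its length to the weighted estimates of Section~3, with Sections~4 and~5 being comparatively short.
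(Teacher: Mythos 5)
Your overall architecture (a Galerkin approximation, a hierarchy of multiplier identities with the weight $e^{kx}$, the sign condition $3-5k^2>0$ arising from the coefficient $k(3-5k^2)$ of the weighted $\|u_x\|^2$ term, compactness to pass to the limit, and an energy argument for uniqueness) is the same as the paper's. But two of your steps do not work as described. First, the discretization: you take a full spatial Galerkin basis on $D$, truncated in $x$, so that the approximate problem is an ODE system in $t$. With such a scheme the multipliers you then rely on --- $e^{kx}u^N$, $\partial_x(e^{kx}u^N_x)$, $\partial_x^5u^N+\Delta u^N_x$ --- are in general \emph{not} in the finite-dimensional span, so you cannot test the projected equations with them and the weighted identities are unavailable at the approximate level. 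The paper's construction is designed precisely to avoid this: it discretizes only in $y$, writing $u^N=\sum_{j\le N}\omega_j(y)g_j(x,t)$ with $\omega_j$ the Dirichlet eigenfunctions of $-\partial_y^2$ on $(0,L)$ and with the $g_j$ solving a genuine coupled system of generalized KdV-type equations in $(x,t)$ on the half-line (whose solvability is quoted from earlier work, not ``immediate''). Because each $g_j$ satisfies a pointwise PDE in $x$, one may multiply it by any function of $x$ such as $e^{kx}g_j$ or $e^{kx}\partial_x^4 g_j$, and because the $\omega_j$ diagonalize $-\partial_y^2$, the multipliers $\partial_y^2u^N$ and $\partial_y^4u^N$ remain admissible. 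Your plan needs either this semi-discretization or some other justification of the weighted multipliers for the approximations.

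Second, you claim the cubic error terms are absorbed ``once $0<L<\pi$'' via the sharp Poincar\'e inequality $\|u\|^2\le (L^2/\pi^2)\|u_y\|^2$. That restriction does not belong to Theorem 3.1, which is stated for \emph{arbitrary} $L>0$ and arbitrary data with $J_w<\infty$, with no smallness assumption. In the existence proof the nonlinearity is controlled by the Ladyzhenskaya-type interpolation $\|u\|_{L^4(D)}^2\le C\|u\|\,\|u\|_{H^1(D)}$ together with Estimate I, and the resulting lower-order terms are not absorbed by dissipation but swept into Gronwall factors that are merely integrable on $[0,T]$. The Steklov/Poincar\'e inequality and the condition $L<\pi$ enter only in Section 6, where exponential decay of small solutions is proved. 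As written, your argument would establish a strictly weaker statement than the theorem, so this part of the plan must be replaced by the interpolation-plus-Gronwall scheme.
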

\begin{rem} Obviously, for all $k$ satisfying the conditions of
Theorem 3.1, there is a real positive number $a$ such that
\begin{equation}
3-5k^2=2a. \end{equation}
\end{rem}
\begin{proof}

{\bf Approximate Solutions.}

To prove the existence part of this theorem, we put $\alpha =0$ and
use the Faedo-Galerkin Method as follows:\\ for all $N$ natural, we
define an approximate solution of (\ref{1.1})-(\ref{1.3}) in the
form
\begin{equation}
u^N(x,y,t)=\sum^N_{j=1}\omega_j(y)g_j(x,t), \label{2.1}
\end{equation}
where $\omega_j(y)$ are orthonormal in $L^2(0,L)$ eigenfunctions of
the following Dirichlet problem:
\begin{eqnarray}
&&-\omega_{jyy}(y)=\lambda_j \omega_j(y),\;y\in(0,L);\nonumber\\
&&\omega_j(0))=\omega_j(L)\nonumber
\end{eqnarray}
and $g_j(x,t)$ are solutions to the following initial boundary value
problem for the system of N generalized KdV equations:
\begin{eqnarray}
&&\frac{\partial}{\partial t}g_j(x,t)+\sum_{l,k=1}^N
a_{lkj}g_l(x,t)g_{kx}(x,t)+\partial^3_x g_j(x,t)\nonumber \\
&& -\partial^5_x g_j(x,t)-\lambda_j g_{jx}(x,t)=0, \nonumber  \\
&& g_j(0,t)=g_{jx}(0,t)=0, \;g_j(x,0)=u_{0j}(x)\label{eN},
\end{eqnarray}
where
\begin{eqnarray}
&&a_{klj}=\int_0^L \omega_k(y)\omega_l(y)\omega_j(y)\,dy,
\;j,k,l=1,...,N;\nonumber\\
&&u_{0j}(x)=\int_0^L u_0(x,y)\omega_j(y) dy.\nonumber
\end{eqnarray}
 Solvability of (3.3) (at least local in t) follows from
\cite{kuvsh,lar-dor,lar2}. Hence, our goal is to prove necessary a
priori estimates, uniform in  $N$, which will permit us to pass to
the limit in (\ref{eN}) as $N\to \infty$ and to establish the
existence result. We assume first that a function $u_0$ is
sufficiently smooth to ensure  calculations.  Exact conditions for
$u_0$ will follow from a priori estimates for $u^N$ independent of
$N$ and usual compactness arguments.
 \begin{rem} We put $\alpha =0$ for technical reasons. The case
$\alpha=1$ does not change the proof of Theorem 3.1.
\end{rem}

{\bf ESTIMATE I.} Multiplying the $j$-equation of (\ref{eN}) by
$g_j(x,t)$, summing over $j=1,..,N$ and integrating the result with
respect to $x$ over $R^+$, we obtain
\begin{eqnarray}
&&\frac{1}{2}\frac{d}{dt}\|u^N\|^2(t)+(|u^N|^2,
u_x^N)(t)+(u^N,\partial^3_x
u^N)(t)\nonumber\\
&&-(u^N,\partial^5_x u^N)(t)+(u^N,\partial^2_y u_x^N)(t)=0.\nonumber
\end{eqnarray}
In our calculations we will drop the index $N$ where this is not
ambiguous. Integrating by parts the last equality, we get

$$\frac{d}{dt}\|u\|^2(t)+\int^L_0 u_{xx}^2(0,y,t)\,dy=0.$$

It follows from here that for $N$ sufficiently large and $\forall
t>0$

\begin{equation}
\|u^N\|^2(t)+\int^t_0\int^L_0 |u^N_{xx}(0,y,\tau)|^2 \,dyd\tau
=\|u^N\|^2(0)\leq 2\|u_0\|^2. \label{E1}
\end{equation}

{\bf ESTIMATE II.} Multiplying the $j$-equation of (\ref{eN}) by
$e^{kx}g_j(x,t)$, summing over $j=1,..,N$ and integrating the result
with respect to $x$ over $R^+$, we obtain
\begin{equation}
(u^N_t+u^N u_x^N+\partial^3_x u^N-\partial^5_x u^N+\partial^2_y
u_x^N, e^{kx}u^N)(t)=0. \nonumber
\end{equation}

Integrating by parts and dropping the index $N$, we deduce
\begin{eqnarray}
&&\frac{d}{dt}(e^{kx},u^2)(t)+(3k-5k^3)(e^{kx},u^2_x)(t)+5k(e^{kx},u_{xx}^2)(t)\nonumber
\\
&& \int_0^L u_{xx}^2(0,y,t)(t)\,dy + k(e^{kx},u^2_y)(t)
+(k^5-k^3)(e^{kx},u^2)(t)\nonumber\\
&&=\frac{2k}{3}(e^{kx},u^3)(t). \label{e21}
\end{eqnarray}

In our calculations, we will frequently use the following
multiplicative inequalities \cite{lady2}:
\begin{prop}
i) For all $u \in H^1(\mathbb{R}^2)$
\begin{equation} {\|u\|}_{L^4(\mathbb{R}^2)}^2 \leq 2 {\|u\|}_{L^2(\mathbb{R}^2)}{\|\nabla u\|}_{L^2(\mathbb{R}^2)}.
 \label{p1}
\end{equation}
\qquad \qquad \qquad \qquad ii) For all $u \in H^1(D)$
\begin{equation} {\|u\|}_{L^4(D)}^2 \leq C_D {\|u\|}_{L^2(D)}{\|u\|}_{H^1(D)}, \label{p2}
\end{equation}
where the constant $C_D$ depends on a way of continuation of $u \in
 H^1(D)$ as $ \tilde{u}(\mathbb{R}^2)$ such that $\tilde{u}(D)=u(D).$
\end{prop}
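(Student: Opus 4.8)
The plan is to establish the classical two–dimensional Ladyzhenskaya estimate (p1) first by the standard one–dimensional slicing argument, and then to deduce (p2) from it by means of a bounded extension operator. In both parts it is enough, by density of smooth functions, to argue for $u\in C_0^\infty(\mathbb{R}^2)$ in the first case and for $u$ smooth up to the boundary in the second, the general case following by approximation in $H^1$.

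For part (i), let $u\in C_0^\infty(\mathbb{R}^2)$. The fundamental theorem of calculus applied in each variable gives, for every $(x,y)$,
$$u^2(x,y)=2\int_{-\infty}^{x}u\,u_x(s,y)\,ds\leq 2\int_{\mathbb{R}}|u||u_x|(s,y)\,ds=:\varphi(y),$$
$$u^2(x,y)=2\int_{-\infty}^{y}u\,u_y(x,s)\,ds\leq 2\int_{\mathbb{R}}|u||u_y|(x,s)\,ds=:\psi(x).$$
Multiplying the two bounds yields $u^4(x,y)\leq\varphi(y)\psi(x)$, and integrating over $\mathbb{R}^2$ with Fubini's theorem,
$$\|u\|_{L^4(\mathbb{R}^2)}^4\leq\Big(\int_{\mathbb{R}}\varphi(y)\,dy\Big)\Big(\int_{\mathbb{R}}\psi(x)\,dx\Big)=4\Big(\int_{\mathbb{R}^2}|u||u_x|\Big)\Big(\int_{\mathbb{R}^2}|u||u_y|\Big).$$
Applying Cauchy--Schwarz to each factor and using $\|u_x\|_{L^2}\|u_y\|_{L^2}\leq\|\nabla u\|_{L^2}^2$, one gets $\|u\|_{L^4}^4\leq 4\|u\|_{L^2}^2\|\nabla u\|_{L^2}^2$, which is (p1) after taking square roots; the passage to general $u\in H^1(\mathbb{R}^2)$ is by density.

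For part (ii), fix a bounded linear extension operator $E\colon H^1(D)\to H^1(\mathbb{R}^2)$ with $Eu|_D=u$; because $D$ is a half-strip, $E$ can be written down explicitly by even reflection across $\{x=0\}$ onto the full strip $\mathbb{R}\times(0,L)$ followed by a reflection-and-cutoff extension in $y$ from $(0,L)$ to $\mathbb{R}$, and such an $E$ satisfies $\|Eu\|_{L^2(\mathbb{R}^2)}\leq c_1\|u\|_{L^2(D)}$ and $\|\nabla Eu\|_{L^2(\mathbb{R}^2)}\leq c_2\|u\|_{H^1(D)}$ with $c_1,c_2$ depending only on $L$ and on the chosen continuation. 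Then by (i),
$$\|u\|_{L^4(D)}^2\leq\|Eu\|_{L^4(\mathbb{R}^2)}^2\leq 2\|Eu\|_{L^2(\mathbb{R}^2)}\|\nabla Eu\|_{L^2(\mathbb{R}^2)}\leq 2c_1c_2\,\|u\|_{L^2(D)}\|u\|_{H^1(D)},$$
so (p2) holds with $C_D=2c_1c_2$, a constant whose value indeed reflects the way $u$ was extended to $\mathbb{R}^2$.

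The argument is essentially routine; the only delicate point is constructing the extension near the two corners $(0,0)$ and $(0,L)$ of the half-strip so that it is simultaneously bounded $L^2(D)\to L^2(\mathbb{R}^2)$ and $H^1(D)\to H^1(\mathbb{R}^2)$. This can be avoided altogether by proving (p2) directly on $D$: bound $u^2(x,y)$ above by $2\int_0^\infty|u||u_x|\,dx'$ (using that $u(\cdot,y)\in H^1(0,\infty)$ vanishes at $+\infty$ for a.e.\ $y$) and by $\tfrac1L\int_0^L u^2\,dy'+2\int_0^L|u||u_y|\,dy'$ (obtained by averaging the identity $u^2(x,y)=u^2(x,y_0)+2\int_{y_0}^{y}u\,u_{y'}\,dy'$ over $y_0\in(0,L)$), then multiply the two bounds and integrate over $D$ as in part (i); the extra lower-order term is precisely what converts $\|\nabla u\|$ into the full norm $\|u\|_{H^1(D)}$ on the right-hand side, yielding $C_D=\sqrt{4+2/L}$.
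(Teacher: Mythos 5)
Your proof is correct. Note first that the paper itself gives no proof of this proposition: it is stated as a known multiplicative inequality with a citation to Ladyzhenskaya--Solonnikov--Uraltseva, so there is no ``paper's argument'' to compare against line by line. What you supply is the classical proof: the one-dimensional slicing bounds $u^2(x,y)\le\varphi(y)$, $u^2(x,y)\le\psi(x)$, multiplication and Fubini for (i), and a bounded extension operator $E:H^1(D)\to H^1(\mathbb{R}^2)$ to reduce (ii) to (i). Both steps check out (in fact your part (i) even gives the sharper constant $\sqrt2$ if one uses $\|u_x\|\,\|u_y\|\le\tfrac12\|\nabla u\|^2$, but the claimed constant $2$ is all that is asserted). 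Your closing remark is the most valuable part: the direct argument on $D$ itself --- bounding $u^2$ by $2\int_0^\infty|u||u_x|\,dx'$ in the unbounded direction and by the averaged identity $\tfrac1L\int_0^L u^2\,dy'+2\int_0^L|u||u_y|\,dy'$ in the bounded one --- avoids the corner issues of the extension entirely and yields the explicit constant $C_D=\sqrt{4+2/L}$, which makes precise the paper's rather vague statement that $C_D$ ``depends on a way of continuation'' and, usefully for the paper's later estimates, shows $C_D$ is independent of the measure of $D$ in the $x$-direction. The only hypothesis you should state explicitly in that direct variant is that for a.e.\ $y$ the slice $u(\cdot,y)$ lies in $H^1(0,\infty)$ and hence vanishes at $+\infty$; this follows from Fubini and needs no boundary condition at $x=0$, so (ii) indeed holds for all of $H^1(D)$ as claimed.
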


Extending $u$ by zero into the exterior of $D$ and making use of
(\ref{E1}), we estimate
\begin{eqnarray*}
&&I= \frac{2k}{3}(e^{kx},u^3)(t) \leq
\dfrac{4k}{3}{\|e^{\frac{kx}{2}}u\|}(t){\|
\nabla(e^{\frac{kx}{2}}u)\|}(t){\|u \|}(t) \\
&&\qquad   \leq C(k,\epsilon)\displaystyle\sup_{{\mathrm{t \in
(0,T)}}}{\|u \|}^2(t){\|
e^{\frac{kx}{2}}u\|}^2(t) + \dfrac{\epsilon k}{4}{\|\nabla(e^{\frac{kx}{2}}u)\|}^2(t) \\
&&\qquad   \leq C(k,\epsilon)\|u_0 \|^2{\|
 e^{\frac{kx}{2}}u\|}^2(t) + \frac{\epsilon k^3}{8}(e^{kx},u^2)(t)  \\
&&\qquad  + \dfrac{\epsilon k}{2}(e^{kx},u_x^2)(t) +\dfrac{\epsilon
k}{4}(e^{kx},u_y^2)(t).
\end{eqnarray*}

Differently from the case of the Zakharov-Kuznetsov equation, see
\cite{h1}, we do not have Estimate II for all positive $k$ because
of the term $(3k-5k^2)(e^{kx},u^2_x)(t)$ in (3.5) which has to be
positively defined. This implies $k(3-5k^2)>0$. Henceforth, we will
put $3-5k^2=2a>0$, where $a$ is a real positive number. Taking this
into account, we substitute $I$ into (\ref{e21}) and obtain  for
$\epsilon>0$ sufficiently small the following inequality:
 \begin{eqnarray}
&&\frac{d}{dt}(e^{kx},u^2)(t)+(e^{kx},u^2_x)(t)+(e^{kx},u_{xx}^2)(t)\nonumber
\\
&& +\int_0^L u_{xx}^2(0,y,\tau)(\tau)\,dy + (e^{kx},u^2_y)(t)
\nonumber\\
&&\leq C(k,\|u_0\|)(e^{kx},u^2)(t). \label{e22}
\end{eqnarray}

By the Gronwall lemma,
$$ (e^{kx},u^2)(t)\leq C(T,k, \|u_0\|)(e^{kx}, u_0^2).$$
Returning to (\ref{e22}) gives
 \begin{eqnarray}
&&(e^{kx},|u^N|^2)(t)+\int_0^t(e^{kx},|u^N_{xx}|^2+|\nabla u^N|^2
)(\tau)\,d\tau \nonumber\\
&&+\int_0^t\int_0^L |u_{xx}^N(0,y,\tau)|^2\,dyd\tau)\leq C(T,k,
\|u_0\|)(e^{kx}, u_0^2),\label{E2}
\end{eqnarray}

where the constant $C$ does not depend on $N$.

{\bf ESTIMATE III.} Taking into account the structure of
$u^N(x,y,t)$, consider the scalar product
\begin{equation}
-2(e^{kx}\partial^2_y u^N,[u^N_t+u^N u_x^N+\partial^3_x
u^N-\partial^5_x u^N+\partial^2_y u^N_x])(t)=0. \nonumber
\end{equation}
Acting as by proving Estimate II and dropping the index $N$, we come
to the following equality:
\begin{eqnarray}
&&\frac{d}{dt}(e^{kx},u^2_y)(t)+2ak(e^{kx},u^2_{xy})(t)+5k(e^{kx},u^2_{xxy}(t))\nonumber\\
&&+\int^L_0 u_{xxy}^2(0,y,t)\,dy
+(k^5-k^3)(e^{kx},u^2_y)(t)+k(e^{kx},u^2_{yy}(t)\nonumber \\
&&+2(u_y,e^{kx} [u_yu_x+u_{xy}u])(t)=0. \label{e31}
\end{eqnarray}

We estimate
$$ I=(u_y,e^{kx} [u_yu_x+u_{xy}u])(t)=\underbrace{(u_x ,e^{kx}u_y^2)(t)}
_{I_1}+ \underbrace{(u, e^{kx}u_yu_{xy})(t)}_{I_2}.$$

Since $u_y \big |_{y=0,L} \neq 0$, we cannot extend $u(x,y,t)$ by
zero into the exterior of $D$ and cannot use inequality (\ref{p1}).
Instead, we use (\ref{p2}): $${\|u\|}_{L^4(D)}^2 \leq C_D
{\|u\|}_{L^2(D)}{\| u\|}_{H^1(D)},$$ where the constant $C_D$ does
not depend on a measure of $D$.
\begin{eqnarray*}
&&I_1=(u_xe^{kx}u_y^2)(t) \leq {\|u_x \|}(t){\|e^{\frac{kx}{2}}u_y \|}_{L^4(D)}^2(t)\\
&& \leq C_D{\|u_x \|}(t){\|e^{\frac{kx}{2}}u_y \|}(t){\|e^{\frac{kx}{2}}u_y \|}_{H^1(D)}(t)\\
&&\qquad  \leq C(\delta){\|u_x \|}^2(t){\|e^{\frac{kx}{2}}u_y \|}^2(t)+ \delta {\|e^{\frac{kx}{2}}u_y \|}_{H^1(D)}^2(t) \\
&&\qquad \leq C(\delta){\|u_x \|}^2(t){\|e^{\frac{kx}{2}}u_y \|}^2(t)
+ \delta(1+\frac{k^2}{2}){\|e^{\frac{kx}{2}}u_y \|}^2(t) \\
&&\qquad +2\delta{\|e^{\frac{kx}{2}}u_{yx} \|}^2(t)
+\delta{\|e^{\frac{kx}{2}}u_{yy} \|}^2(t),
\end{eqnarray*}
\begin{eqnarray*}
 &&I_2=(u, e^{kx}u_yu_{xy})(t)= \dfrac{1}{2}(u, e^{kx}{(u_y^2)}_x)(t) \\
&&\qquad = -\dfrac{k}{2} (u, e^{kx}u_y^2)(t) - \dfrac{1}{2}( e^{kx}u_x,u_y^2)(t) \\
&&\qquad \leq C(k,\delta){\|u \|}^2(t){\|e^{\frac{kx}{2}}u_y
\|}^2(t)+C(\delta){\|u_x \|}^2(t){\|e^{\frac{kx}{2}}u_y \|}^2(t)  \\
&&\qquad +4\delta{\|e^{\frac{kx}{2}}u_{yx} \|}^2(t)+2\delta
{\|e^{\frac{kx}{2}}u_{yy} \|}^2(t)+2\delta \left(1+ \frac{k^2}{2}
\right)\|e^{\frac{kx}{2}}u_y \|^2(t),
\end{eqnarray*}
where $\delta$ is an arbitrary positive constant. \\
Substituting $I_1-I_2$ into (\ref{e31}), taking $\delta
>0$ sufficiently small and using (\ref{E1}),\;(\ref{E2}), we come to the
inequality
\begin{eqnarray}
&& \dfrac{d}{dt}{\|e^{\frac{kx}{2}}u_y
\|}^2(t)+\displaystyle\int_{0}^{L}u^2_{yxx}(0,y,t)\,dy \nonumber\\
&&+(e^{kx},[u_{xy}^2+u_{yy}^2+u^2_{xxy}])(t) \leq C(k, \delta)[ {\|e^{\frac{kx}{2}}u_y \|}^2(t) \nonumber\\
&&  +[ {\|u \|}^2(t)+{\|u_x \|}^2(t)]{\|e^{\frac{kx}{2}}u_y
\|}^2(t)]. \label{e32} \end{eqnarray} Making use the Gronwall lemma
and Estimates I, II, we find
\begin{eqnarray*}
{\|e^{\frac{kx}{2}}u_y \|}^2(t)& \leq & (e^{kx}, u_{0y}^2)e^{C(k,
 \delta)\displaystyle\int_{0}^{t}\left[{\|u \|}^2(\tau)+{\|u_x \|}^2(\tau)+1 \right]d\tau.}  \\
& \leq & (e^{kx}, u_{0y}^2)e^{C(k, \delta, \|u_0\|,T)(e^{kx},u_0^2)}
\leq C(e^{kx}, u_{0y}^2).
\end{eqnarray*}
Integrating (\ref{e32}) over $(0,t)$ gives
\begin{eqnarray}
& (e^{kx},|u^N_y|^2)(t)
+ \displaystyle\int_{0}^{t}(e^{kx},[|u^N_{xy}|^2+|u^N_{yy}|^2+|u^N_{xxy}|^2])(\tau)\, d\tau \nonumber \\
&+\displaystyle\int_{0}^{t}\int^L_0 |u^N_{xxy}(0,y,\tau)|^2\,dy
d\tau  \leq C(k,T,\|u_0\|)(e^{kx},u_{0y}^2). \label{E3}
\end{eqnarray}

{\bf ESTIMATE IV.}  Dropping the index $N$, transform the scalar
product
$$2(e^{kx}\partial^4_y u^N,[u^N_t+u^N u^N_x+\partial^3_x
u^N-\partial^5_x u^N+\partial^2_y u^N_x])(t)=0
$$
into the following equality:
\begin{eqnarray}
&&\dfrac{d}{dt}(e^{kx},u_{yy}^2)(t)
+2ak(e^{kx},({|D_y^2u_x|}^2)(t)+k(e^{kx},{|D_y^3u|}^2)(t) \nonumber \\
&& +5k(e^{kx},({|D_y^2u_{xx}|}^2)(t)+\displaystyle\int^L_0
|\partial^2_y u_{xx}(0,y,t)|^2\,dy
+(k^5-k^3)(e^{kx},({|D_y^2u|}^2)(t)\nonumber\\
&& +2(u_{yy}e^{kx},{(uu_x)}_{yy})(t)=0. \label{e4}
\end{eqnarray}
Denote
$$I=(u_{yy}e^{kx},{(uu_x)}_{yy})(t)=\underbrace{(e^{kx}u_{yy}^2,u_x)(t)}_{I_{1}}
+2\underbrace{(e^{kx}u_{yy},u_yu_{xy})(t)}_{I_{2}} +
\underbrace{(e^{kx}u_{yy},u_{xyy}u)(t)}_{I_{3}}.$$ Making use of
(\ref{p1}) and (\ref{p2}), we estimate for all $\delta>0$
\begin{eqnarray*}
&&I_{1}=(e^{kx}u_{yy}^2,u_x)(t) \leq {\|u_x
\|}(t)\|e^{\frac{kx}{2}}u_{yy} \|_{L^4(D)}^2(t)\\
&&\leq 2\|u_x \|(t){\|e^{\frac{kx}{2}}u_{yy} \|}(t){\|\nabla(e^{\frac{kx}{2}}u_{yy} )\|}(t) \\
&& \leq C(\delta){\|u_x \|}^2(t){\|e^{\frac{kx}{2}}u_{yy} \|}^2(t) + \delta\frac{k^2}{2}{\|e^{\frac{kx}{2}}u_{yy} \|}_{L^2(D)}^2(t) \\
&& +2\delta{\|e^{\frac{kx}{2}}u_{yyx} \|}^2(t)+\delta {\|e^{\frac{kx}{2}}u_{yyy} \|}^2(t), \\
&&I_{2}=2(e^{kx}u_{yy},u_yu_{yx})(t)=(e^{kx}u_{yy},{(u_y^2)}_x)(t)\nonumber  \\
&& =-\underbrace{k(e^{kx}u_{yy},u_y^2)(t)}_{I_{21}}
 - \underbrace{(e^{kx}u_{yyx},u_y^2)(t)}_{I_{22}}; \\
&&I_{21} \leq k{\| e^{kx}u_y^2 \|}(t){\| u_{yy} \|}(t)  \leq C(k){\|
e^{\frac{kx}{2}}u_y \|}(t){\| e^{\frac{kx}{2}}u_y \|}_{H^1(D)}(t){\|
u_{yy} \|}(t) \\
&&\leq \delta{\| e^{\frac{kx}{2}}u_y \|}^2(t) {\|
e^{\frac{kx}{2}}u_y \|}_{H^1(D)}^2(t)
+C(\delta,k){\| e^{kx}u_{yy} \|}^2(t),\\
&&I_{22} \leq \delta {\| u_{yyx} \|}^2(t) + C(\delta){\|
e^{\frac{kx}{2}}u_y \|}^2(t){\| e^{\frac{kx}{2}}u_y
\|}_{H^1(D)}^2(t),\\
&&I_{3}= (e^{kx}u_{yy},u_{yyx}u)(t) = \frac{1}{2}(e^{kx}u,{(u_{yy}^2)}_x)(t) \\
&&= -\underbrace{\dfrac{k}{2}
(e^{kx}u,u_{yy}^2)(t)y}_{I_{31}}-\underbrace{ \frac{1}{2}
 (e^{kx}u_x,u_{yy}^2)(t)}_{I_{32}}, \\
&&I_{31}\leq \dfrac{k}{2}{\|  u\|}(t){\|e^{\frac{kx}{2}}u_{yy}\|}_{L^4(D)}^2(t) \\
&&  \leq C(\delta,k){\| u\|}^2(t){\|e^{\frac{kx}{2}}u_{yy}\|}^2(t)
+\delta\frac{k^2}{2}{\|e^{\frac{kx}{2}}u_{yy}  \|}^2(t) \\
&& +2\delta{\|e^{\frac{kx}{2}}u_{yyx}  \|}^2(t)+ \delta {\|e^{\frac{kx}{2}}u_{yyy}  \|}^2(t), \\
&&I_{32} \leq C(\delta,k){\| u_x \|}^2(t){\|e^{\frac{kx}{2}}u_{yy}\|}^2(t)
+\delta\frac{k^2}{2}{\|e^{\frac{kx}{2}}u_{yy}\|}^2(t) \\
&& +2\delta{\|e^{\frac{kx}{2}}u_{yyx}  \|}^2(t)+ \delta
{\|e^{\frac{kx}{2}}u_{yyy}  \|}^2(t).
\end{eqnarray*}
Taking  $\delta >0$ sufficiently small and substituting $I_1-I_3$
into (\ref{e4}), we obtain
\begin{eqnarray}
&&\dfrac{d}{dt}(e^{kx},u_{yy}^2)(t)
+(e^{kx},[{|D_y^2u_x|}^2+{|D_y^3u|}^2+{|D_y^2u_{xx}|}^2])(t) \nonumber \\
&&+\displaystyle\int_{0}^{L}{|D_y^2u_{xx}(0,y,t)|}^2dy  \leq C(k){\| e^{\frac{kx}{2}}u_y \|}^2(t){\| e^{\frac{kx}{2}}u_y \|}_{H^1(D)}^2(t) \nonumber \\
&& +C(k)\left[ {\| u_x \|}^2(t)+{\| u
\|}^2(t)+1\right](e^{kx},u_{yy}^2)(t).
\end{eqnarray}
The previous estimates and the Gronwall lemma yield
\begin{eqnarray}
&& ( e^{kx},|u^N_{yy}|^2)(t)
+\displaystyle\int_{0}^{t}( e^{kx},[{|D_y^2u^N_x|}^2+{|D_y^3u^N|}^2+{|D_y^2u^N_{xx}|}^2])(\tau)\,d\tau \nonumber \\
&&+\displaystyle\int_{0}^{t}\displaystyle\int_{0}^{L}{|D_y^2u^N_{xx}(0,y,\tau)|}^2dyd\tau\leq
C(e^{kx},u_{0y}^2+u_{0yy}^2), \label{E4}
\end{eqnarray}
where the constant $C$ does not depend on $N.$

\begin{prop} \label{psup}

Let $u\in H^2(D)$ such that $u_{xxy},\,u_{yyx}\in L^2(D)$ and
$u(x,0,t)=u(x,L,t)=u(0,y,t)=0.$ Then
\begin{eqnarray}
&&\displaystyle\sup_D u^2(x,y,t)\leq
\|u\|^2(t)+\|\nabla u\|^2(t)+\|u_{xy}\|^2(t),\nonumber\\
&&\displaystyle\sup_D u^2_x(x,y,t)\leq \|u_x\|^2(t)+\|\nabla
u_x\|^2(t)+\|u_{xxy}\|^2(t), \nonumber\\
&&\displaystyle\sup_D u^2_y(x,y,t)\leq \|u_y\|^2(t)+\|\nabla
u_y\|^2(t)+\|u_{xyy}\|^2(t),\nonumber.
\end{eqnarray}
\end{prop}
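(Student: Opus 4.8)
The plan is to derive all three estimates from a single device: an anisotropic Agmon-type inequality in which the one-dimensional Sobolev inequality is used first in the unbounded variable $x\in(0,\infty)$ and then in the bounded variable $y\in(0,L)$. Since the estimates only involve norms, I would first reduce to functions as smooth as the manipulations below require (in the applications $u$ is a Galerkin sum $u^N$, which is smooth), so that all pointwise steps are legitimate.

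For the $x$-direction I would use that any $w$ with $w(\cdot,y)\in H^1(0,\infty)$ has $w^2(\cdot,y)\in W^{1,1}(0,\infty)$, hence limit $0$ at $+\infty$, so $w^2(x,y)=-\int_x^\infty 2ww_\xi(\xi,y)\,d\xi$ and consequently $\sup_{x>0}w^2(x,y)\le 2\|w(\cdot,y)\|_{L^2(0,\infty)}\|w_x(\cdot,y)\|_{L^2(0,\infty)}$. For the $y$-direction I would note that if, for a.e.\ $x$, the function $\phi(x,\cdot)\in H^1(0,L)$ has a zero in $[0,L]$, then $\phi^2(x,y)\le 2\int_0^L|\phi\phi_\eta|(x,\eta)\,d\eta$, so integration in $x$ followed by the Cauchy--Schwarz inequality on $D$ gives $\int_0^\infty\phi^2(x,y)\,dx\le 2\|\phi\|\,\|\phi_y\|$, uniformly in $y$. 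Combining these two facts (applying the second with $\phi=w$ and with $\phi=w_x$), for every $w$ such that $w(x,\cdot)$ and $w_x(x,\cdot)$ have a zero in $[0,L]$ for a.e.\ $x$ one obtains
$$\sup_D w^2\le 4\left(\|w\|\,\|w_x\|\,\|w_y\|\,\|w_{xy}\|\right)^{1/2}\le\|w\|^2+\|w_x\|^2+\|w_y\|^2+\|w_{xy}\|^2$$
by the arithmetic--geometric mean inequality. Taking $w=u$ (here $u|_{y=0}=0$, hence also $u_x|_{y=0}=0$) gives the first estimate; taking $w=u_x$ (here $u|_{y=0}=0$ gives $u_x|_{y=0}=u_{xx}|_{y=0}=0$, and $u_{xxy}\in L^2(D)$ by hypothesis) gives the second, since then $\|w_x\|^2+\|w_y\|^2=\|\nabla u_x\|^2$.

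The step I expect to be the real obstacle is the case $w=u_y$: neither $u_y$ nor $u_{xy}$ vanishes on the lateral walls $\{y=0\}$, $\{y=L\}$, so the $y$-direction step cannot be supplied by a boundary condition. The way around this is that $u$ vanishes on \emph{both} walls, which forces $u_y$ and $u_{xy}$ to have zero mean in $y$: $\int_0^L u_y(x,y)\,dy=u(x,L)-u(x,0)=0$ and $\int_0^L u_{xy}(x,y)\,dy=u_x(x,L)-u_x(x,0)=0$. As $u_y(x,\cdot)$ and $u_{xy}(x,\cdot)$ are continuous on $[0,L]$ for a.e.\ $x$ (the second because $u_{xyy}=u_{yyx}\in L^2(D)$ by hypothesis, so $u_{xy}(x,\cdot)\in H^1(0,L)$ for a.e.\ $x$), zero mean yields a zero in $(0,L)$; the $y$-step then applies and the combination above gives $\|u_y\|^2+\|u_{xy}\|^2+\|u_{yy}\|^2+\|u_{xyy}\|^2=\|u_y\|^2+\|\nabla u_y\|^2+\|u_{xyy}\|^2$. (The same bounds also follow from a two-variable fundamental-theorem-of-calculus identity anchored at a corner of $D$; that is immediate for $u$ and $u_x$, while for $u_y$ one again invokes the zero-mean observation to locate the inner endpoint of integration.)
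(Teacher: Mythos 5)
Your argument is correct and is essentially the paper's own proof: both rest on a two-step anisotropic Agmon-type inequality combined with the key observation that, since $u$ vanishes on both walls $y=0$ and $y=L$, the functions $u_y(x,\cdot)$ and $u_{xy}(x,\cdot)$ must vanish at some interior point of $(0,L)$, which substitutes for the missing boundary condition in the third estimate. The only difference is cosmetic: you apply the one-dimensional Sobolev bound in $x$ first and then in $y$, whereas the paper does $y$ first (reducing $\sup_y u_y^2$ to $\rho^2(x,t)=\int_0^L(u_y^2+u_{yy}^2)\,dy$) and then $x$; the resulting bounds coincide.
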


\begin{proof} We will prove the last inequality; the others can be
proven in the same manner. Due to boundary conditions
$u(x,0,t)=u(x,L,t)=0$, there is a point $y=m, \;m\in (0,L)$ for
fixed $(x,t)$ such that $u_y(x,m,t)=0.$ It implies
\begin{eqnarray}
&& u_y(x,y,t)^2=\int^y_m \partial_s [u^2_y(x,s,t)]\,ds\leq 2
\int^y_0
|u_y(x,y,t)u_{yy}(x,y,t)|\,dy\nonumber\\
&&\leq 2(\int_0^L u_y^2\,dy)^{\frac{1}{2}}(\int_0^L
u_{yy}^2\,dy)^{\frac{1}{2}}.
\end{eqnarray}
Hence,
\begin{equation}
\displaystyle\sup_D u_y^2(x,y,t)\leq \int_0^L u_y^2\,dy+\int_0^L
u_{yy}^2\,dy\equiv \rho^2(x,t).\nonumber
\end{equation}

On the other hand,
\begin{eqnarray}
&&\displaystyle\sup_{x\in R^+} \rho^2(x,t)\leq
\int_0^{\infty}\rho^2(x,t)\,dx+\int_0^{\infty}\rho_x^2(x,t)\,dx\nonumber\\
&&\leq\displaystyle\int_D [u^2_y(x,y,t)+|\nabla
u_y(x,y,t)|^2+u^2_{yyx}(x,y,t)]\,dxdy.
\end{eqnarray}
The proof of Proposition 3.3 is complete.
\end{proof}

{\bf ESTIMATE V.} To estimate $u^N_t$, we differentiate (\ref{eN})
with respect to $t$, multiply the $j$-equation of the resulting
system by $g_{jt}$, sum up over $j=1,...,N$ and integrate over
$R^+$. Calculations, similar to those exploited in Estimate II,
imply

\begin{eqnarray}
&&\frac{d}{dt}(e^{kx},u^2_t)(t)+2ak(e^{kx},u^2_{xt})(t)+5k(e^{kx},u_{txx}^2)(t)\nonumber\\
&&+\int_0^L u_{t xx}^2(0,y,t)\,dy +k(e^{kx},u^2_{yt})(t)\nonumber\\
&&+(k^5-k^3)(e^{kx},u^2_t)(t)=2k(e^{kx},[u u_x]_t,u_t)(t).
\label{5.1}
\end{eqnarray}

We estimate the nonlinear term as follows:
\begin{eqnarray*}
&& I=([uu_x]_t, e^{kx}u_t)(t)=([uu_t]_x, e^{kx}u_t)(t)\\
&&=-k\underbrace{(e^{kx},u_t^2u)(t)}_{I_{1}} -\underbrace{
(e^{kx}uu_t,u_{tx})(t)}_{I_{2}}.
\end{eqnarray*}
 By (\ref{p1}) and (\ref{p2}), for all $\delta
>0$
\begin{eqnarray*}
&&I_{1}=(e^{kx}u_t^2,u)(t) \leq C(\delta){\|
u\|}^2(t){\|e^{\frac{kx}{2}}u_t  \|}^2(t)
+\delta(1+\frac{k^2}{2}){\|e^{\frac{kx}{2}}u_t  \|}^2(t) \\
&&\qquad +2\delta{\|e^{\frac{kx}{2}}u_{tx}  \|}^2(t)+ \delta {\|e^{\frac{kx}{2}}u_{ty}  \|}^2(t), \\
&&I_{2}=(e^{kx}uu_t,u_{tx})(t)= \dfrac{1}{2}(e^{kx}u,{(u_t^2)}_x)(t) \leq C(\delta,k)[{\| u \|}^2(t)\\
&& +{\| u_x \|}^2(t)]{\| e^{\frac{kx}{2}}u_t \|}^2(t)\\
&&  +2\delta{\|e^{\frac{kx}{2}}u_{tx}  \|}^2(t)+ \delta
{\|e^{\frac{kx}{2}}u_{ty}
\|}^2(t)+\delta(1+\frac{k^2}{2}){\|e^{\frac{kx}{2}}u_t\|}^2(t).
\end{eqnarray*}
Substituting $I$ into (\ref{5.1}), taking  $\delta>0$ sufficiently
small and making use of Estimates I-IV and the Gronwall lemma, we
find

\begin{eqnarray*}
(e^{kx},u_t^2)(t)& \leq  (e^{kx},u_{t}^2)(0)e^{C(k,T)(e^{kx},u_0^2)}
 \leq &C(k,T,\|u_0\|)J_w.\nonumber
\end{eqnarray*}
Returning to (\ref{5.1}), we deduce
\begin{eqnarray}
&&(e^{kx},|u^N_t|^2)(t)+\int_0^t\int_0^L
|u^N_{xxs}(0,y,s)|^2\,dyds\nonumber\\
&&+\int_0^t(e^{kx},|u^N_{xs}|^2+|u^N_{xxs}|^2+|u^N_{ys}|^2)(s)\,ds
\leq C(k,T)J_w. \label{Ev}
\end{eqnarray}

{\bf ESTIMATE VI.}

Dropping the index $N$, transform the scalar product
$$(e^{kx}(\partial^4_x u^N-2u_{xx}^N),[u^N_t+u^N u^N_x+\partial^3_x
u^N-\partial^5_x u^N+\partial^2_y u^N_x])(t)=0
$$
into the following equality:

\begin{eqnarray}
&&\frac{d}{dt}(e^{kx},\frac{1}{2}u_{xx}^2+u_x^2)(t)+k(e^{kx},\frac{1}{2}|\partial^4_x
u|^2+u_{xxx}^2+u_{xx}^2)(t)\nonumber\\
&&+\displaystyle\int_0^L\{\frac{1}{2}|\partial_x^4
u(0,y,t)|^2+u_{xxx}^2(0,y,t)+u_{xx}^2(0,y,t)\}\,dy\nonumber\\
&&=-(e^{kx}u_t,2ku_x+k^2u_{xx})(t)-2k(e^{kx}u_{xt},u_{xx})(t)\nonumber\\
&&(+e^{kx}\partial_x^4 u,
2ku_{xx}-u_{xyy})(t)+\frac{k}{2}(e^{kx},u_{xxx}^2)(t)\nonumber\\
&&+\frac{1}{2}\displaystyle\int_0^L
u_{xxx}^2(0,y,t)\,dy+2\displaystyle\int_0^L \partial_x^4
u(0,y,t)u_{xx}(0,y,t)\,dy\nonumber\\
&&+(e^{kx} uu_x, 2u_{xx}-\partial_x^4 u)(t). \label{e6}
\end{eqnarray}

Making use of (\ref{psup}), we estimate the last scalar product in
the right-hand side of (3.20) as follows:
\begin{eqnarray*}
&&(e^{kx} uu_x, 2u_{xx}-\partial_x^4 u)(t)\leq
\delta(e^{kx},|\partial_x^4 u|^2)(t)\\
&&+(\frac{1}{4\delta}+1)(e^{kx},[u^2+|\nabla
u|^2+u_{xy}^2])(t)(e^{kx},u_{xx}^2+u_x^2)(t),
\end{eqnarray*}
where $\delta$ is an arbitrary positive number. Using the Young
inequality ($ab\leq \delta a^2+\frac{1}{4\delta}b^2)$, choosing
$\delta$ sufficiently small and integrating (3.20), we come to the
inequality
\begin{eqnarray}
&&(e^{kx},\frac{1}{2}u_{xx}^2+u_x^2)(t)+\displaystyle\int_0^t\displaystyle\int_0^L
\big[|\partial_x^4 u(0,y,s)|^2+u_{xxx}^2(0,y,s)\big]\,dy ds
\nonumber\\
&&+\displaystyle\int_0^t(e^{kx},u_{xxx}^2+|\partial_x^4
u|^2)(s)\,ds\leq (e^{kx},u_{0x}^2+u_{0xx}^2)\nonumber\\
&&+C(k,T)[\displaystyle\int_0^t(e^{kx},[u^2+|\nabla u|^2
+u_{xy}^2])(s)(e^{kx},u_x^2+u_{xx}^2)(s)\,ds\nonumber\\
&&+\displaystyle\int_0^t\big[(e^{kx},\{|\nabla
u|^2+u_{xx}^2+u_s^2+u_{xs}^2+u_{xyy}^2\})(s)\nonumber\\
&&+\displaystyle\int_0^L u_{xx}^2(0,y,s)\,dy\big]\,ds]. \label{e7}
\end{eqnarray}
Due to previous estimates, $(e^{kx}, u^2+|\nabla
u|^2+u_{xy}^2)(s)\in L^1(0,T).$ Hence, the Gronwall lemma and
(\ref{e7}) imply that
\begin{eqnarray}
&&(e^{kx},u_{xx}^2+u_x^2)(t)+\displaystyle\int_0^t(e^{kx},u_{xxx}^2+|\partial_x^4
u|^2)(s)\,ds\nonumber\\
&&+\displaystyle\int_0^t \displaystyle\int_0^L \big[|\partial_x^4
u(0,y,s)|^2+u_{xxx}^2(0,y,s)\big]\,dy ds\nonumber\\&& \leq C(k,T)J_w
.\label{e8}
\end{eqnarray}
Now from the equality

$$-(e^{kx}\partial^5_x u^N,[u^N_t+u^N u^N_x+\partial^3_x
u^N-\partial^5_x u^N+\partial^2_y u^N_x])(t)=0
$$
we find that

$$\displaystyle\int_0^t(e^{kx},|\partial_x^5 u|^2)(s)\,ds\leq
C(k,T)J_w.
$$
Taking into account (\ref{e8}), we obtain

\begin{eqnarray}
&&e^{kx},|u^N_{xx}|^2+|u^N_x|^2)(t)+\displaystyle\int_0^t(e^{kx},|u^N_{xxx}|^2+|\partial_x^4
u^N|^2+|\partial_x^5 u^N|^2)(s)\,ds\nonumber\\
&&+\displaystyle\int_0^t \displaystyle\int_0^L \big[|\partial_x^4
u^N(0,y,s)|^2+|u^N_{xxx}|^2(0,y,s)\big]\,dy ds \nonumber\\&&\leq
C(k,T)J_w \label{EVI}
\end{eqnarray}
with the constant independent of $N.$

{\bf ESTIMATE VII.} Omitting the index $N$, we deduce from the
scalar product
$$-2(e^{kx}u_{yy}^N,[u^N_t+u^N u^N_x+\partial^3_x
u^N-\partial^5_x u^N+\partial^2_y u^N_x])(t)=0
$$
the following equality:
\begin{eqnarray}
&&
5k(e^{kx},u_{xxy}^2)(t)+2ak(e^{kx},u_{xy}^2)(t)+\displaystyle\int_0^L
u_{xxy}^2(0,y,t)\,dy\nonumber\\
&&+(e^{kx},u_{yy}^2)(t)+(k^5-k^3)(e^{kx},u^2_y)(t)
\nonumber\\&&=2(e^{kx}[u_t+uu_x],u_{yy})(t). \label{e9}
\end{eqnarray}

The term $I=2(e^{kx}uu_x,u_{yy})(t)$ may be estimated as
\begin{eqnarray*}
&&I\leq \frac{1}{\delta}(e^{kx},u_{yy}^2)(t)+\delta \sup _D
u^2(x,y,t)(e^{kx},u_x^2)(t)\\
&&\leq\delta (e^{kx},u_{xy}^2)(t)(e^{kx},u_x^2)(t)+\delta
(e^{kx},u^2+|\nabla
u|^2)(t)(e^{kx},u_x^2)(t)\nonumber\\&&+\frac{1}{\delta}(e^{kx},u_{yy}^2)(t).
\end{eqnarray*}
Taking into account (\ref{EVI}) and choosing $\delta>0$ sufficiently
small, we find
\begin{equation}
(e^{kx},u_{xy}^2+u_{xxy}^2)(t)+\displaystyle\int_0^L
u_{xxy}^2(0,y,t)\,dy\leq C(k,T,J_w)J_w. \label{EVII}
\end{equation}

Jointly, Estimates I-VI read

\begin{eqnarray}
&&(e^{kx}, \big[|u^N|^2+|u^N_t|^2+|\nabla u^N|^2+|\nabla
u^N_x|^2+|\nabla u^N_y|^2+|u^N_{xxy}|^2\big ])(t)\nonumber\\
&&+\displaystyle\int_0^L|u^N_{xxy}(0,y,t)|^2\,dy+\displaystyle\int_0^t(e^{kx},
\big[|\nabla u^N_s|^2+|\nabla
u^N|^2+|\nabla u^N_x|^2\nonumber\\
&&+|\nabla u^N_y|^2+|\nabla u^N_{xx}|^2+|\nabla
u^N_{yy}|^2+|\partial_x^4 u^N|^2+|\partial_x^5 u^N|^2+|\partial_x^2
u^N_{yy}|^2 \big ])(s)\,ds\nonumber\\
&&+\displaystyle\int_0^t\displaystyle\int_0^L[|u^N_{xxy}(0,y,s)|^2+|u^N_{xxyy}(0,y,s)|^2\nonumber\\
&&+|\partial_x^4 u^N(0,y,s)|^2+|\partial_x^3 u^N(0,y,s)|^2
]\,dyds\nonumber\\&&\leq C(k,T,J_w)J_w, \label{EF}
\end{eqnarray}
where the constant $C(k,T,J_w)$ does not depend on $N$.

\section { Passage to the limit as $N$ tends to $\infty$.}

Uniform in $N$ estimate (\ref{EF}) and standard arguments imply that
there exists a function $u(x,y,t)=\lim_{N\to \infty}u^N(x,y,t)$ \;
such that

\begin{eqnarray*}
&& u\in L^{\infty}(0,T;H^2(D))\cap L^2(0,T;H^3(D));\,
\partial_x^4 u,\;\partial_x^5 u \in L^2(0,T;L^2(D)),\\
&&u_{xxy}\in L^{\infty}(0,T;L^2(D)),\,\; u_t\in
L^{\infty}(0,T;L^2(D))\cap L^2(0,T;H^1(D))
\end{eqnarray*}
and $u(x,y,t)$ satisfies the following integral identity:
\begin{equation}
\displaystyle\int_0^T\int_D\left[u_t+uu_x+\Delta u_x-\partial_x^5
u\right]\psi(x,y,t)\,dxdydt=0, \label{solut}
\end{equation}
where $\psi(x,y,t)$ is an arbitrary function from $L^2(D_T)$.
Obviously, $u(x,y,t)$ is a solution to the problem (2.1)-(2.4) and
satisfies estimate (3.26). It follows from (\ref{solut}) and (3.26)
that
$$ \partial_x^5 u+\Delta u_x \in L^{\infty}(0,T;L^2(D))\cap
L^2(0,T;H^1(D)).$$ This proves the existence part of Theorem
\ref{T1}.

\section{Uniqueness}
 Let $u_1$ and $u_2$ be distinct solutions of
$(\ref{1.1})-(\ref{1.3})$ and $z=u_1-u_2$. Then $z(x,y,t)$ satisfies
the following initial boundary value problem:

\begin{eqnarray}
&& Lz= z_t +  \dfrac{1}{2}(u_1^2-u_2^2)_x + \partial_x^3z -\partial_x^5 z +\partial_y^2 z_x=0 \ \textrm{in} \ Q_t; \label{u1} \\
&& z(0,y,t)= z_x(0,y,t)=z(x,0,t)=z(x,L,t)=0, \nonumber \\
& & \qquad \qquad \qquad y \in (0,L), \quad x>0, \quad t>0; \label{u2}\\
&& z(x,y,0)=0, \quad (x,y) \in D. \label{u3}
\end{eqnarray}

From the scalar product
\begin{equation}
2(Lz, e^{kx}z)(t)=0, \label{eu}
\end{equation}
 acting in the same manner as by the proof of Estimate II and using Proposition 3.3, we obtain
\begin{eqnarray*}
&&I_1= 2\displaystyle\int_De^{kx}z_tz\,dxdy=
\dfrac{d}{dt}\displaystyle\int_De^{kx}z^2\,dxdy,\\
&&I_2=2\displaystyle\int_De^{kx}z\left[\partial_x^3z+\partial^2_y
z_x
-\partial_x^5z \right]\,dxdy\nonumber\\
&&=\displaystyle\int_{0}^{L}z_{xx}^2(0,y,t)\,dy
+k\displaystyle\int_D\left[2az_x^2+z_y^2+5z^2_{xx}\right]\,dxdy,\\
&&I_3=(\left[u_1^2-u_2^2\right]_x,e^{kx}z)(t)\leq
\delta(e^{kx},z_x^2)(t)\\
&&+C(\delta,k)\sum_{i=1}^2\left[\|u_i\|^2(t)+\|\nabla
u_i\|^2(t)+\|u_{ixy}\|^2(t)\right](e^{kx},z^2)(t).
\end{eqnarray*}
Substituting $I_1-I_3$ into (\ref{eu}) and taking $\delta>0$
sufficiently small, we come to the inequality
$$\dfrac{d}{dt}(e^{kx},z^2)(t) \leq C(k)\sum_{i=1}^2\left[\|u_i\|^2(t)+\|\nabla
u_i\|^2(t)+\|u_{ixy}\|^2(t)\right](e^{kx},z^2)(t).$$ Taking into
account that by (3.26) $(e^{kx},|u_i|^2+|\nabla
u_i|^2+|u_{ixy}|^2)(t)  \in L^1(0,T)$ $(i=1,2)$ and (\ref{u3}), we
get $\|z\|(t)\equiv 0$ for $a.e.\:t \in (0,T).$ This proves
uniqueness of a regular solution of (\ref{1.1})-(\ref{1.3}) and
completes the proof of Theorem 3.1.

\end{proof}

\section{Decay of Solutions}

In order to study the behavior of solutions while $t\to \infty,$ it
is necessary to consider the presence of the linear transport term
$u_x$, because this term is crucial for the appearance of critical
sets where  decay of solutions may fail to exist \cite{rosier2}.

\begin{teo} \label{tdec}
Let $\alpha =1$  and $L,k$ be real positive numbers such that $L \in
(0,\pi)$,\;  $k^2 <\min(\frac{3}{5}, \frac{4\delta^2}{9})$. Given
$u_0(x,y)$ such that
$$u_0(0,y,t)=u_{0x}(0,y,t)=u_0(x,0,t)=u_0(x,L,t)=0$$ and
$$\|u_0\|^2 \leq \dfrac{9\delta^2}{2}\min(\frac{1}{8},
\frac{a}{2},\frac{\delta^2}{4}),$$
 where
$$\delta^2=\frac{\pi^2-L^2}{4L^2} .$$  Then regular solutions of (\ref{1.1})-(\ref{1.3}) satisfy the
inequality
\begin{eqnarray*}
&&{\|u\|}_{H^1(D)}^2(t)+{\|\partial_x^2 u\|}^2(t)\\
&&\leq C(k,\chi,(e^{kx},u_0^2))(1+t)e^{-\chi t} (e^{kx},
u_0^2+|\nabla u_0|^2+u_{0xx}^2+|u_0|^3),
\end{eqnarray*}
where\qquad $\chi=k(\delta^2+k^4).$
\end{teo}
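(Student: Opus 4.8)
The plan is to derive a differential inequality of the form $\frac{d}{dt}E(t)+\chi E(t)\le (\text{small nonlinear remainder})$ for a suitably weighted energy, where $E(t)$ is comparable to $\|u\|^2_{H^1(D)}(t)+\|u_{xx}\|^2(t)$, and then integrate. First I would build the energy in stages, exactly mirroring the a~priori estimates of Section~3 but now with the transport term $\alpha=1$ retained. The lowest-level identity comes from multiplying (\ref{1.1}) by $2e^{kx}u$ and integrating over $D$: this produces $\frac{d}{dt}(e^{kx},u^2)+(3k-5k^3)(e^{kx},u_x^2)+5k(e^{kx},u_{xx}^2)+k(e^{kx},u_y^2)+(k^5-k^3+k)(e^{kx},u^2)=\frac{2k}{3}(e^{kx},u^3)$, where the extra $+k(e^{kx},u^2)$ relative to (\ref{e21}) is the gift of the transport term $\alpha u_x=u_x$. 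The crucial structural point is that $(e^{kx},u^2)\le \frac{L^2}{\pi^2}(e^{kx},u_y^2)$ by the sharp Poincar\'e inequality quoted in the introduction (applied on each vertical segment, with the weight $e^{kx}$ frozen in $x$), so the $k(e^{kx},u_y^2)$ term can absorb a $k(1+\delta^2)(e^{kx},u^2)$ piece with $\delta^2=\frac{\pi^2-L^2}{4L^2}$; this is exactly where $L<\pi$ enters and why $\chi=k(\delta^2+k^4)$ appears.

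Next I would differentiate the equation and repeat the scheme to control $\nabla u$, then $u_{xx}$, forming $E(t)=(e^{kx},\,u^2+|\nabla u|^2+u_{xx}^2)(t)$ (or a convex combination with small coefficients on the higher-order pieces so the cross terms close). Each level contributes a good dissipative term proportional to $k$ times a higher weighted norm, plus a boundary term at $x=0$ with a favorable sign (these are discarded), plus nonlinear terms. The nonlinear terms are handled precisely as in Estimates II--VII: using Proposition~\ref{psup} to bound $\sup_D u^2$, $\sup_D u_x^2$, $\sup_D u_y^2$ by weighted $H^2$-type norms, and using the multiplicative inequalities (\ref{p1})--(\ref{p2}) together with the global smallness $\|u_0\|^2\le \frac{9\delta^2}{2}\min(\frac18,\frac a2,\frac{\delta^2}{4})$ and Estimate~I ($\|u\|^2(t)\le 2\|u_0\|^2$) to make every nonlinear contribution strictly smaller than the available dissipation. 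This is where the smallness hypotheses on $\|u_0\|$ and on $k$ (namely $k^2<\min(\frac35,\frac{4\delta^2}{9})$) are consumed: they guarantee a bootstrap/continuity argument keeps $(e^{kx},u^2)(t)$ small for all $t$, so the nonlinear remainder stays subordinate.

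Once the nonlinear and boundary terms are absorbed, I would arrive at an inequality $\frac{d}{dt}E(t)+\chi E(t)\le 0$ possibly up to a term proportional to the weighted cubic $(e^{kx},|u|^3)(t)$ that cannot be fully absorbed and instead is estimated by $C(e^{kx},u^2)(t)\cdot\big(\text{something bounded}\big)$; feeding in the already-established decay $(e^{kx},u^2)(t)\le C e^{-\chi t}(e^{kx},u_0^2)$ from the first-level inequality, this residual source decays like $e^{-\chi t}$, and Duhamel/Gronwall then yields $E(t)\le C(1+t)e^{-\chi t}(e^{kx},u_0^2+|\nabla u_0|^2+u_{0xx}^2+|u_0|^3)$ — the factor $(1+t)$ being the familiar artifact of a resonant forcing at the same exponential rate. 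Finally, dropping the weight ($e^{kx}\ge 1$) gives the unweighted conclusion $\|u\|^2_{H^1(D)}(t)+\|\partial_x^2 u\|^2(t)\le C(1+t)e^{-\chi t}(\cdots)$.

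The main obstacle I anticipate is the bookkeeping at the $u_{xx}$ level: when one multiplies by $e^{kx}(\partial_x^4 u-2u_{xx})$ as in Estimate~VI, the term $(e^{kx}\partial_x^4 u,\,2ku_{xx}-u_{xyy})$ and the reappearing $\frac k2(e^{kx},u_{xxx}^2)$ must be reorganized so that the genuinely dissipative combination dominates, and one must verify that the boundary contributions at $x=0$ (involving $\partial_x^4u(0,y,t)$, $u_{xxx}(0,y,t)$) indeed have the right sign or cancel — the sign of these boundary quadratic forms is the delicate point, since an unfavorable sign there would break the scheme. Keeping all the $k$-dependent coefficients large enough relative to the $\delta$-dependent Poincar\'e gain, while simultaneously keeping $k$ small enough that $3-5k^2>0$ and $k^2<\frac{4\delta^2}{9}$, is the quantitative tightrope; I would organize the constants so that $\chi=k(\delta^2+k^4)$ emerges as the worst (smallest) exponential rate across all levels.
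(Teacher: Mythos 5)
Your first stage is exactly the paper's Lemma \ref{ld1}: the identity obtained from $2(e^{kx}u,Lu)=0$, the scaled Steklov/Poincar\'e inequality of Proposition \ref{propcr} to convert $k(e^{kx},u_y^2)$ into a coercive multiple of $(e^{kx},u^2)$, and the smallness of $\|u_0\|$ to absorb the cubic term, yielding $(e^{kx},u^2)(t)\le e^{-\chi t}(e^{kx},u_0^2)$ with $\chi=k(\delta^2+k^4)$. Your diagnosis of the $(1+t)$ factor as resonant forcing at the rate $\chi$ is also the correct mechanism. But for the $H^1$ and $u_{xx}$ levels the paper does not run the weighted differential-inequality scheme through higher derivatives as you propose. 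Instead (Proposition \ref{pd}) it pairs $Lu$ with the \emph{unweighted} multiplier $u_{xx}+u_{yy}-\partial_x^4u+\frac{u^2}{2}$, i.e.\ the variational derivative of the Hamiltonian $\|u_{xx}\|^2+\|\nabla u\|^2-\frac13\int_Du^3$. With this choice the nonlinear contributions cancel identically ($I_3+I_4=0$) and the boundary terms at $x=0$ combine into a nonnegative quadratic form ($I_2+I_5+I_6\ge0$), so the functional is nonincreasing with no smallness and no absorption needed at this level. Multiplying the same identity by $e^{\chi t}$ produces the term $\chi\int_0^te^{\chi s}[\,\cdot\,]\,ds$, which is controlled by the time-integrated weighted dissipation $\int_0^te^{\chi\tau}(e^{kx},|\nabla u|^2+u_{xx}^2)\,d\tau\le C(1+t)(\cdots)$ coming for free from the first-level identity multiplied by $e^{\chi t}$; the cubic $\int_Du^3$ is reabsorbed by Gagliardo--Nirenberg. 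That is the whole of Lemma \ref{ld2}.

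The difference is not merely cosmetic: your route, as sketched, has a gap precisely at the point you flag. To get the rate $\chi$ for $(e^{kx},|\nabla u|^2+u_{xx}^2)$ by your scheme you must absorb, at each derivative level, the negative zeroth-order terms $-(k+k^3-k^5)(e^{kx},\cdot)$ generated by the transport term and by $\partial_x^3$ under the weight. For $u$ (and for $u_x$, $u_{xx}$, which vanish at $y=0,L$) the Steklov inequality in $y$ supplies the needed coercivity, but $u_y$ does \emph{not} vanish at $y=0,L$, so Proposition \ref{propcr} is unavailable for $(e^{kx},u_y^2)$ and you would have to invoke a Poincar\'e--Wirtinger inequality using that $u_y$ has zero $y$-mean --- a step you do not supply. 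Likewise the sign-indefinite boundary term $2\int_0^L\partial_x^4u(0,y,t)\,u_{xx}(0,y,t)\,dy$ arising from the weighted multiplier $e^{kx}(\partial_x^4u-2u_{xx})$ is identified by you as the delicate point but not resolved. The paper's Hamiltonian multiplier is exactly the device that makes both difficulties disappear, at the price of conceding the $(1+t)$ loss; your plan would need these two steps filled in before it closes.
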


\begin{proof}
\begin{lem}\label{ld1}
Let all the conditions of Theorem 6.1 be fulfilled. Then regular
solutions of (\ref{1.1})-(\ref{1.3}) satisfy the inequality
$${\|u\|}^2(t)\leq(e^{kx},u^2)(t) \leq e^{-\chi t} (e^{kx}, u_0^2),$$
where $\chi=k(\delta^2+k^4).$
\end{lem}
\begin{proof}
Transform the integral
\begin{eqnarray}
&&(u,Lu)(t)= (u,u_t)(t) + (u,u_x)(t)+(u^2,u_x)(t)\nonumber\\
&&+(u,\Delta u_x)(t)-(\partial_x^5 u,u)(t)=0 \label{e6}
\end{eqnarray}
into the equality
$${\|u\|}^2(t)+ \displaystyle\int_{0}^{t}\displaystyle\int_{0}^{L} u_{xx}^2(0,y,\tau)\,dyd\tau= {\|u_0\|}^2,$$
whence
\begin{equation}
{\|u\|}^2(t) \leq {\|u_0\|}^2, \quad t > 0. \label{2.2}
\end{equation}
Next, consider for $k$ defined in conditions of Theorem 6.1 the
equality
 \begin{eqnarray}
 &&(e^{kx}u,
Lu)(t)=(e^{kx}u, u_t)(t)+ (e^{kx}u, u_x)(t) \nonumber
\\
&&+(e^{kx}u^2, u_x)(t)+ (e^{kx}u,\Delta u_x-\partial_x^5 u)(t)=0
\nonumber
\end{eqnarray} which can be reduced to the form
\begin{eqnarray}
&& \dfrac{d}{dt}(e^{kx}, u^2)(t) + k(e^{kx}, u_y^2+2a u_x^2
+5u_{xx}^2)(t)
+ \displaystyle\int_{0}^{L} u_{xx}^2(0,y,t)\,dy\nonumber \\
&& -(k+k^3-k^5)(e^{kx}, u^2)(t)-\dfrac{2k}{3}(e^{kx}, u^3)(t)=0.
\label{e61}
\end{eqnarray}
Using (\ref{p1}),  we calculate
\begin{eqnarray}
&&I=- \dfrac{2k}{3}(e^{kx}, u^3)(t)  \leq  \dfrac{2k}{3}{\|u\|}(t){\|e^{kx/2}u\|}_{L^4(D)}^2(t) \nonumber \\
&&  \leq \dfrac{4k}{3}{\|u\|}(t){\|e^{kx/2}u\|}(t){\|\nabla
(e^{kx/2}u)\|}(t). \nonumber
\end{eqnarray}
Taking into account (\ref{2.2}),
$$I \leq \dfrac{4k}{3}{\|u_0\|}{\|e^{kx/2}u\|}(t) \left\{ (e^{kx}, [u_y^2+ \frac{k^2}{2} u^2 + 2u_x^2])(t) \right\}^{1/2}.$$
By the Young inequality,
\begin{equation}
I  \leq \epsilon k(e^{kx}, 2u_y^2 + 4 u_x^2+  k^2 u^2)(t)+
\dfrac{2k}{9\epsilon}{\|u_0\|}^2(e^{kx}, u^2)(t), \label{2.6}
\end{equation}
where $\epsilon$ is an arbitrary positive number. \\
Taking $0< \epsilon <\min(\frac{1}{8},\frac{a}{2}),$ we reduce
(\ref{e61}) to the following inequality:
\begin{eqnarray}
 & & \dfrac{d}{dt} (e^{kx}, u^2)(t) - (k+\epsilon k^3
+k^3-k^5)(e^{kx}, u^2)(t) + k(2a-4\epsilon)(e^{kx},u_x^2)(t) \nonumber \\
&&  + k(1-2\epsilon) (e^{kx}, u_y^2)(t) -
\dfrac{2k}{9\epsilon}{\|u_0\|}^2 (e^{kx}, u^2)(t) \leq 0.
\label{2.7}
\end{eqnarray}
The following proposition is crucial for our proof.
\begin{prop}\label{propcr}
Let $L>0$ be a finite  number and  $u(x,y,t)$ be a regular solution
to (2.1)-(2.4). Then
\begin{equation}\label{7.1}
\int_{R^+}\int_0^L e^{kx}u^2(x,y,t)\,dy\,dx\leq
\frac{L^2}{\pi^2}\int_{R^+}\int_0^L e^{kx}u_y^2(x,y,t)\,dy\,dx.
\end{equation}
\end{prop}
\begin{proof}
Since $u(x,0,t)=u(x,L,t)=0,$ fixing $x,t$, we can use with respect
to $y$ the following Steklov inequality: if $f(y)\in H^1_0(0,\pi)$
then
$$\int_0^{\pi}f^2(y)\,dy\leq \int_0^{\pi} |f_y(y)|^2\,dy.$$
After a corresponding process of scaling we prove  Proposition
\ref{propcr}.
\end{proof}

Making use of \eqref{7.1}, we get

\begin{eqnarray}
&& \dfrac{d}{dt} (e^{kx}, u^2)(t) +k
\left[\dfrac{\pi^2}{L^2}-1-\dfrac{2\pi^2\epsilon}{L^2}
-\epsilon k^2-k^2+k^4\right](e^{kx}, u^2)(t) \nonumber \\
&&+2(a-2\epsilon)(e^{kx},u_x^2)(t)  -
\dfrac{2k}{9\epsilon}{\|u_0\|}^2 (e^{kx}, u^2)(t) \leq 0.\nonumber
\end{eqnarray}
Denoting
\begin{equation} \dfrac{\pi^2}{L^2}-1=4 {\delta}^2 >0 \label{2.9} \end{equation}
and taking
$$\epsilon<\min(\frac{1}{8},\frac{a}{2},\frac{\delta^2}{4}),\qquad k^2<\min(\frac{3}{5},\frac{4\delta^2}{9}),$$
we find
$$\dfrac{d}{dt} (e^{kx},
u^2)(t)+2k(\delta^2-\frac{\|u_0\|^2}{9\epsilon}+\frac{k^4}{2})(e^{kx},u^2)(t).$$
By the conditions of Lemma 6.2,
$$ \frac{\|u_0\|^2}{9\epsilon}\leq\frac{\delta^2}{2},$$
hence
$$\frac{d}{dt}(e^{kx},u^2)(t)+\chi(e^{kx},u^2)(t)\leq 0,$$
where $\chi=k(\delta^2+k^4).$\\
This implies
$${\|u\|}^2(t)\leq(e^{kx},u^2)(t) \leq e^{-\chi t} (e^{kx}, u_0^2),$$
The proof of Lemma \ref{ld1} is complete. \end{proof}

\begin{prop} \label{pd}Regular solutions of (\ref{1.1})-(\ref{1.3}) satisfy the
inequality
\begin{eqnarray}
&&\|u_{xx}\|^2(t)+\|\nabla u\|^2(t) -\frac{1}{3}\displaystyle\int_D
u^3\,dxdy\nonumber\\
&&  \leq\|u_{0xx}\|+\|\nabla u_0\|^2-\frac{1}{3}\displaystyle\int_D
u_0^3\,dxdy. \label{6.1}
\end{eqnarray}
\end{prop}
\begin{proof} Estimate separate terms in the following scalar
product:
\begin{eqnarray}
&& -2(u_t+u_x +uu_x+u_{xxx}+u_{xyy}-\partial_x^5 u,\nonumber\\
&&[u_{xx} +u_{yy}-\partial_x^4 u+\frac{u^2}{2}])(t)=0. \label{6.2}
\end{eqnarray}

That is
\begin{eqnarray}
&& I_1=-2(u_t,u_{xx} +u_{yy}-\partial_x^4 u+\frac{u^2}{2})(t)\nonumber\\
&&=\frac{d}{dt}[\|u_{xx}\|^2(t)+\|\nabla u\|^2(t)
-\frac{1}{3}\displaystyle\int_D u^3\,dxdy] ,
\nonumber\\
&& I_2=-2(u_x,u_{xx} +u_{yy}-\partial_x^4
u+\frac{u^2}{2})(t)=\displaystyle\int_0^L
u_{xx}^2(0,y,t)\,dy.\nonumber\\
&& I_3=-2(uu_x,u_{xx} +u_{yy}-\partial_x^4 u+\frac{u^2}{2})(t) =(u^2,u_{xxx}+u_{xyy}-\partial_x^5 u)(t),\nonumber\\
&& I_4=-(u^2,u_{xxx}+u_{xyy}-\partial_x^5 u)(t)=-I_3,\nonumber\\
&& I_5=-2(-\partial_x^5 u,u_{xx}+u_{yy}-\partial_x^4 u)(t)
=\displaystyle\int_0^L\{|\partial_x^4
u(0,y,t)|^2\nonumber\\
&&-2\partial_x^4u(0,y,t)u_{xx}(0,y,t)+u^2_{xxy}(0,y,t)+u^2_{xxx}(0,y,t)\}\,dy,\nonumber\\
&& I_6=-2(u_{xxx},u_{xx}+u_{yy}-\partial_x^4
u)(t)\nonumber\\
&&=-\displaystyle\int_0^L
[u_{xxx}^2(0,y,t)-u_{xx}^2(0,y,t)]\,dy.\nonumber
\end{eqnarray}
It is easy to see that
$$I_3+I_4=0,\qquad I_2+I_5+I_6\geq 0.$$
Hence
$$ \frac{d}{dt}[\|u_{xx}\|^2(t)+\|\nabla u\|^2(t)
-\frac{1}{3}\displaystyle\int_D u^3\,dxdy]\leq 0$$ which implies
(\ref{6.1}). The proof of Proposition \ref{pd} is complete.
\end{proof}

\begin{lem} \label{ld2} Let all the conditions of Theorem \ref{tdec} be fulfilled.
Then  regular solutions of (\ref{1.1})-(\ref{1.3}) satisfy the
inequality
\begin{eqnarray*}
&&\|u_{xx}\|^2(t)+\|\nabla u\|^2(t)\leq C(1+t)e^{-\chi t}(e^{kx},
\\
&&[u_0^2+|\nabla u_0|^2+ u^2_{0xx}+|u_0|^3]).
\end{eqnarray*}
\end{lem}
\begin{proof}

Acting in the same manner as by the proof of Proposition \ref{pd},
we get from the scalar product
$$-2(e^{\chi t}Lu,u_{xx} +u_{yy}-\partial_x^4
u+\frac{u^2}{2})(t)=0$$
the following inequality:
\begin{eqnarray}
&&e^{\chi t}[\|u_{xx}\|^2(t)+\|\nabla u\|^2(t)
-\frac{1}{3}\displaystyle\int_D u^3\,dxdy]\nonumber\\
&&-\chi\{\displaystyle\int_0^t e^{\chi s}[\|u_{xx}\|^2(s)+\|\nabla
u\|^2(s) -\frac{1}{3}\displaystyle\int_D u^3(x,y,s)\,dxdy]\,ds\}\nonumber\\
&&\leq|u_{0xx}\|+|\nabla u_0\|^2-\frac{1}{3}\displaystyle\int_D
u_0^3\,dxdy. \label{6.3}
\end{eqnarray}
Making use of (\ref{p1}), we get
$$\frac{e^{\chi t}}{3}[\displaystyle\int_D u^3(x,y,t)\,dxdy\leq
\frac{3}{2}\|\nabla u\|^2(t)+\frac{2}{3}\|u\|^4(t)].$$

Substituting this into (\ref{6.3}) reads

\begin{eqnarray}
&&e^{\chi t}[\|u_{xx}\|^2(t)+\|\nabla u\|^2(t)]\leq  \frac{4e^{\chi
t}}{9}\|u\|^4(t)\nonumber\\
&&+\frac{4\chi}{3}\displaystyle\int_0^t e^{\chi
s}[2\|u_{xx}\|^2(s)+2\|\nabla u\|^2(s)+\|u\|^4(s)]\,ds\nonumber\\
&&+2[\|u_{0xx}\|^2+2\|\nabla
u_0\|^2+\frac{1}{3}\|u_0\|^3_{L^3(D)}].\label{6.4}
\end{eqnarray}

From the scalar product
$$2(e^{\chi t}Lu,e^{kx}u)(t)=0$$
we deduce
\begin{eqnarray}
&&e^{\chi t}(e^{kx},u^2)(t)-\displaystyle\int_{0}^{t}e^{\chi
\tau}(\chi+k+k^3-k^5)(e^{kx},u^2)(\tau)\,d\tau\nonumber\\
&&+k\displaystyle\int_{0}^{t}e^{\chi
\tau}[2a(e^{kx},u_x^2)(\tau)+(e^{kx},u_y^2)(\tau)+5(e^{kx},u^2_{xx})(\tau)]\,d\tau\nonumber\\
&&-\frac{2k}{3}\displaystyle\int_{0}^{t}e^{\chi \tau}(e^{kx},
 u^3)(\tau)\,d\tau
+\displaystyle\int_{0}^{t}e^{\chi\tau}\displaystyle\int_{0}^{L}u_{xx}^2(0,y,\tau)dyd\tau\nonumber\\&&=(e^{kx},u^2_0).
\label{6.6}
\end{eqnarray}
Making use of (\ref{p1}), we estimate
\begin{eqnarray}
&&I=-\frac{2k}{3}(e^{kx},u^3)(t) \leq
\frac{4k}{3}(e^{kx},u^2)(t)\|\nabla(e^{\frac{kx}{2}}u)\|(t)\nonumber\\
&&\leq 2k\delta(e^{kx},|\nabla
u|^2+u^2_{xx})(t)+\frac{k^3}{2}\delta(e^{kx},u^2)(t)+\frac{24k}{9\delta}(e^{kx},u^2)^2(t).\nonumber
\end{eqnarray}

Taking $\delta=\frac{1}{4}\min(1,2a)$ and using Lemma 6.2, we obtain
\begin{equation}
I\leq \frac{k}{2}(e^{kx},|\nabla
u|^2+u^2_{xx})(t)+C(k)(e^{kx},u^2)(t).\nonumber
\end{equation}

Substituting $I$ into (\ref{6.6}) gives
\begin{eqnarray}
&&e^{\chi
t}(e^{kx},u^2)(t)+\frac{k}{2}\displaystyle\int_{0}^{t}e^{\chi
\tau}(e^{kx},|\nabla u|^2+u^2_{xx})(\tau)\,d\tau\nonumber\\
&&\leq C(k,\chi)\displaystyle\int_{0}^{t}e^{\chi
\tau}(e^{kx},u^2)(\tau)\,d\tau+(e^{kx},u_0^2). \label{6.10}
\end{eqnarray}
By Lemma 6.2,
$$(e^{kx},u^2)(t)\leq e^{-\chi t}(e^{kx},u_0^2)$$
which implies
$$\displaystyle\int_{0}^{t}e^{\chi
\tau}(e^{kx},u^2)(\tau)\,d\tau\leq t(e^{kx},u_0^2).$$
 Returning to
(\ref{6.10}), we get
 \begin{eqnarray}
&& e^{\chi t}[\|\nabla u\|^2(t)+\|u_{xx}\|^2(t)]
+\frac{8\chi}{3}\displaystyle\int_{0}^{t}e^{\chi
 \tau}(e^{kx},|\nabla u|^2+u^2_{xx})(\tau)\,d\tau\nonumber\\
&&\leq\frac{4}{9}e^{\chi t}\|u\|^4(t)
+\frac{4\chi}{3}\displaystyle\int_{0}^{t}e^{\chi
 \tau}(e^{kx},u^2)^2(\tau)\,d\tau\nonumber\\&&+2(e^{kx},|\nabla u_0|^2+|u_0|^3+u^2_{0xx}).\label{6.11}
 \end{eqnarray}
 Again by Lemma 6.2,
 $$e^{\chi t}\|u\|^4(t)\leq
 e^{-\chi t}(e^{kx},u_0^2)^2\leq (e^{kx},u_0^2)^2,$$
$$\displaystyle\int_{0}^{t}e^{\chi
 \tau}(e^{kx},u^2)^2(\tau)\,d\tau \leq C(\chi)(e^{kx},u_0^2),$$
and from (\ref{6.11})
$$\displaystyle\int_{0}^{t}e^{\chi \tau}(e^{kx},|\nabla
u|^2+u^2_{xx})(\tau)\,d\tau\leq C(1+t)(e^{kx},u^2_0+ |\nabla
u_0|^2+|u_0|^3+u^2_{0xx}).
$$

Then (\ref{6.11}) becomes
$$\|\nabla u\|^2(t)+\|u_{xx}\|^2(t)\leq C(1+t) e^{-\chi
t}(e^{kx},u_0^2+|\nabla u_0|^2+|u_0|^3+u^2_{0xx}).$$ This proves
Lemma \ref{ld2}.
\end{proof} Hereby, the proof of Theorem \ref{tdec} is complete.
\end{proof}
\vspace{0.2cm}
\par In the case $\alpha=0$ we have the following
result:

\vspace{0.2cm}

\begin{teo}
Let $\alpha=0$, $L>0$, $k^2<\min(\frac{3}{5},\frac{\pi^2}{5L^2})$.
Given $u_0(x,y)$ such that
$$u_0(0,y,t)=u_{0x}(0,y,t)=u_0(x,0,t)=u_0(x,L,t)=0$$ and
$\|u_0\|^2\leq
\dfrac{9\pi^2}{16L^2}\min(\dfrac{1}{4},\dfrac{a}{2})$. Then regular
solutions of (\ref{1.1})-(\ref{1.3}) satisfy the following
inequality
$$ \|u\|^2_{H^1(D)}(t)+\|u_{xx}\|^2(t) \leq C(1+t) e^{- \chi t}(e^{kx}, u_0^2+|\nabla u_{0}|^2+u_{0xx}^2+|u_0|^3),$$
where $\chi=k(\dfrac{\pi^2} {8L^2}+k^4)$.
\end{teo}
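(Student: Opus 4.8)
The plan is to run the same three-stage argument that proves Theorem \ref{tdec}, with one structural simplification: for $\alpha=0$ the transport term $u_x$ is absent, so testing the equation against $e^{kx}u$ produces \emph{exactly} the identity \eqref{e21} of Estimate II (there is no extra $-k(e^{kx},u^2)(t)$ term competing against the Steklov gain). This is why no upper bound on $L$ is needed and why the sharp constant $L^2/\pi^2$ of Proposition \ref{propcr} may be used directly.

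First I would record, from $(u,Lu)(t)=0$ with $\alpha=0$, the energy identity $\|u\|^2(t)+\int_0^t\int_0^L u_{xx}^2(0,y,\tau)\,dy\,d\tau=\|u_0\|^2$, hence $\|u\|(t)\le\|u_0\|$. Then, starting from \eqref{e21}, I would estimate the cubic term $\frac{2k}{3}(e^{kx},u^3)(t)$ by \eqref{p1} and the Young inequality exactly as in \eqref{2.6}, obtaining on the right-hand side a small multiple of $k(e^{kx},u_x^2+u_y^2)(t)$, a small multiple of $k^3(e^{kx},u^2)(t)$, and the term $\frac{2k}{9\epsilon}\|u_0\|^2(e^{kx},u^2)(t)$. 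Absorbing the first into the good terms $(3k-5k^3)(e^{kx},u_x^2)(t)$ and $k(e^{kx},u_y^2)(t)$ of \eqref{e21}, which needs $3-5k^2>0$ and $\epsilon<\min(\frac{1}{4},\frac{a}{2})$, and then applying Proposition \ref{propcr} in the form $(e^{kx},u^2)(t)\le\frac{L^2}{\pi^2}(e^{kx},u_y^2)(t)$ to the surviving part of the $u_y^2$ term, I would reduce \eqref{e21} to $\frac{d}{dt}(e^{kx},u^2)(t)+\chi(e^{kx},u^2)(t)\le0$ with $\chi=k(\frac{\pi^2}{8L^2}+k^4)$; the hypotheses $k^2<\frac{\pi^2}{5L^2}$ and $\|u_0\|^2\le\frac{9\pi^2}{16L^2}\min(\frac{1}{4},\frac{a}{2})$ are precisely what is required to make the resulting coefficient of $(e^{kx},u^2)(t)$ at least $\chi$. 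The Gronwall lemma then yields $\|u\|^2(t)\le(e^{kx},u^2)(t)\le e^{-\chi t}(e^{kx},u_0^2)$, the $\alpha=0$ analogue of Lemma \ref{ld1}.

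Next I would establish the analogue of Proposition \ref{pd}: testing the equation against $-2(u_{xx}+u_{yy}-\partial_x^4 u+\frac{u^2}{2})$, the cubic contributions cancel ($I_3+I_4=0$) and the boundary terms combine into $\int_0^L[(\partial_x^4 u(0,y,t)-u_{xx}(0,y,t))^2+u_{xxy}^2(0,y,t)]\,dy\ge0$; the only change from the $\alpha=1$ computation is that the non-negative term $\int_0^L u_{xx}^2(0,y,t)\,dy$ produced by $u_x$ is now missing, which does not affect the sign, so $\frac{d}{dt}[\|u_{xx}\|^2(t)+\|\nabla u\|^2(t)-\frac{1}{3}\int_D u^3\,dx\,dy]\le0$. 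Repeating this computation with the weight $e^{\chi t}$ and bounding $\int_D u^3$ by $\frac{3}{2}\|\nabla u\|^2+\frac{2}{3}\|u\|^4$ via \eqref{p1} turns the result into an inequality of the form \eqref{6.4}. In parallel, the $e^{\chi t}$-weighted, time-integrated version of \eqref{e21} — coming from $2(e^{\chi t}Lu,e^{kx}u)(t)=0$ — together with one more use of \eqref{p1} on its cubic term gives, as in \eqref{6.10}, $e^{\chi t}(e^{kx},u^2)(t)+\frac{k}{2}\int_0^t e^{\chi\tau}(e^{kx},|\nabla u|^2+u_{xx}^2)(\tau)\,d\tau\le C\int_0^t e^{\chi\tau}(e^{kx},u^2)(\tau)\,d\tau+(e^{kx},u_0^2)$. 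Inserting $(e^{kx},u^2)(\tau)\le e^{-\chi\tau}(e^{kx},u_0^2)$ from the first step gives $\int_0^t e^{\chi\tau}(e^{kx},u^2)(\tau)\,d\tau\le t(e^{kx},u_0^2)$, hence a $C(1+t)(e^{kx},u_0^2)$-bound for $\int_0^t e^{\chi\tau}(e^{kx},|\nabla u|^2+u_{xx}^2)(\tau)\,d\tau$; substituting this, together with $e^{\chi t}\|u\|^4(t)\le(e^{kx},u_0^2)^2$ and $\int_0^t e^{\chi\tau}(e^{kx},u^2)^2(\tau)\,d\tau\le C(e^{kx},u_0^2)^2$, into the inequality of the form \eqref{6.4} and dividing by $e^{\chi t}$ produces $\|\nabla u\|^2(t)+\|u_{xx}\|^2(t)\le C(1+t)e^{-\chi t}(e^{kx},u_0^2+|\nabla u_0|^2+u_{0xx}^2+|u_0|^3)$. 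Adding $\|u\|^2(t)\le e^{-\chi t}(e^{kx},u_0^2)$ from the first step gives the claimed bound for $\|u\|_{H^1(D)}^2(t)+\|u_{xx}\|^2(t)$.

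The only real difficulty lies in the constant bookkeeping of the first step: one must keep track of every coefficient so that the stated smallness of $k$ and of $\|u_0\|$ is exactly enough to leave the coefficient $\chi=k(\frac{\pi^2}{8L^2}+k^4)$ in front of $(e^{kx},u^2)(t)$ after the $\epsilon$-terms have been absorbed and the sharp Steklov inequality has been applied — a non-sharp Poincaré constant would still close the argument, but only for a strictly smaller range of $L$ and $k$ and with a worse rate. The second and third steps are a routine transcription of the $\alpha=1$ computations; the cancellation $I_3+I_4=0$ and the positivity of the boundary terms are insensitive to the value of $\alpha$, so no new idea is needed there.
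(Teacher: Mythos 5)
Your proposal is correct and follows essentially the same route as the paper, which simply repeats the proof of Lemma \ref{ld1} without the transport term (so the Steklov gain $k\frac{\pi^2}{L^2}(e^{kx},u^2)$ is no longer offset by $-k(e^{kx},u^2)$, removing the restriction on $L$) and then repeats the arguments of Proposition \ref{pd} and Lemma \ref{ld2}. Your constant bookkeeping in the first step and your observation that the missing boundary term $\int_0^L u_{xx}^2(0,y,t)\,dy$ from $I_2$ is harmless both match the paper's computation.
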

\begin{proof} Repeating the proof of Lemma 6.2, we come to the
following inequality:
\begin{eqnarray*}
&&\dfrac{d}{dt}(e^{kx},u^2)(t)+k[\frac{\pi^2}{L^2}(1-2\epsilon)-k^2(1+\epsilon)+k^4](e^{kx},u^2)(t)\\
&&+2(a-2\epsilon)(e^{kx},u^2_x)(t)-\frac{2k}{9\epsilon}\|u_0\|^2(e^{kx},u^2)(t)\leq
0.
\end{eqnarray*}
Taking \quad $\epsilon < \min(\frac{1}{4},\;\frac{a}{2}),$ we get
\begin{equation}
\dfrac{d}{dt}(e^{kx},u^2)(t)+k[\frac{\pi^2}{2L^2}-k^2(1+\epsilon)+k^4](e^{kx},u^2)(t)\nonumber\\
-\frac{2k}{9\epsilon}\|u_0\|^2(e^{kx},u^2)(t)\leq 0.
\end{equation}
Since\quad $ \epsilon< \frac{1}{4},$  putting
$$ k^2< \min(\frac{3}{5},\;\frac{\pi^2}{5L^2}), $$
we obtain

\begin{equation}
\dfrac{d}{dt}(e^{kx},u^2)(t)+k[\frac{\pi^2}{4L^2}+k^4-\frac{2}{9\epsilon}\|u_0\|^2](e^{kx},u^2)(t)\leq
0.\nonumber
\end{equation}

By the conditions of Theorem 6.6, \quad $\|u_0\|^2<
\dfrac{9\pi^2\epsilon}{16L^2}. $ This implies
$$\dfrac{d}{dt}(e^{kx},u^2)(t)+\chi(e^{kx},u^2)(t)\leq 0,$$
where $$\chi=k(\dfrac{\pi^2}{8L^2}+k^4).$$ Hence,
$$\|u\|^2(t)\leq (e^{kx},u^2)(t)\leq e^{-\chi t}(e^{kx},u_0^2).$$
The rest of the proof of Theorem 6.6 is a simple repetition of the
proof of Theorem 6.1.
 \end{proof}

\begin{rem} The presence in (\ref{1.1}) of the linear term
$u_x$ \;(the case $\alpha =1$) implies a restriction for value of
$L:(L < \pi)$ which means that a channel $D$ has limitations on its
width. On the other hand,
 absence of this term (the case  $\alpha=0$) allows  $L$ to be any finite positive number; it means that a channel
 may be of any finite width.
\end{rem}

\end{document}